\documentclass[reqno]{amsart}
\usepackage{amsmath,amsfonts,amssymb,amscd,verbatim,multicol}
\usepackage{fullpage}

\usepackage{graphicx}
\usepackage{stmaryrd}
\usepackage{mathtools}
\usepackage{dsfont}
\usepackage{xcolor}
\usepackage{enumitem}
\usepackage{cancel}
\usepackage[normalem]{ulem}

\numberwithin{equation}{section}
\newtheorem{theorem}{Theorem} 
\numberwithin{theorem}{section}

\newtheorem{corollary}[theorem]{Corollary} 
\newtheorem{lemma}[theorem]{Lemma}
\newtheorem{proposition}[theorem]{Proposition}

\newtheorem{definition}[theorem]{Definition}

\newtheorem{hypothesis}[theorem]{Hypothesis} 

\newtheorem{remark}[theorem]{Remark}

\newtheorem*{theorem*}{Theorem}


\DeclareMathOperator\Aut{Aut}

\DeclareMathOperator\lcm{lcm}

\DeclareMathOperator\ord{ord}

\DeclareMathOperator\supp{supp}

\newcommand\NN{\mathbb N}
\newcommand\ZZ{\mathbb Z}

\newcommand\be{\mathbf e}
\newcommand\bu{\mathbf u}
\newcommand\bv{\mathbf v}

\newcommand\bzero{\mathbf 0}

\newcommand\kk{\Bbbk}

\newcommand\inv{^{-1}}
\newcommand\iso{\cong}

\newcommand\tensor{\otimes}
\newcommand\qder{\delta}
\newcommand\GWAdeg{p}
\newcommand\og{s}
\newcommand\om{m}

\newcommand{\grp}[1]{{\langle {#1} \rangle}}

\begin{document}

\title{Pointed Hopf actions on quantum generalized Weyl algebras}
\author[Gaddis]{Jason Gaddis}
\address{(Gaddis) Miami University, Department of Mathematics, Oxford, Ohio 45056} 
\email{gaddisj@miamioh.edu}

\author[Won]{Robert Won}
\address{(Won) The George Washington University, Department of Mathematics, Washington, DC 20052}
\email{robertwon@gwu.edu}

\subjclass[2010]{Primary 16T05, Secondary 16W50}
\keywords{Generalized Taft algebra, generalized Weyl algebra, Hopf algebra actions}


\begin{abstract}
We study actions of pointed Hopf algebras in the $\ZZ$-graded setting. Our main result classifies inner-faithful actions of generalized Taft algebras on quantum generalized Weyl algebras which respect the $\ZZ$-grading. We also show that generically the invariant rings of Taft actions on quantum generalized Weyl algebras are commutative rings.
\end{abstract}

\maketitle

\section{Introduction}

The goal of this work is to study Hopf actions in the setting of $\ZZ$-graded algebras. The Weyl algebra is an example of such an algebra, but the Weyl algebra has no finite quantum symmetry \cite{CEW2}. Instead, we study generalized Weyl algebras (GWAs), specifically quantum GWAs. By work of Su\'{a}rez-Alvarez and Vivas, the automorphism groups of quantum GWAs are well-understood \cite{SAV}. Here we classify actions of (generalized) Taft algebras on quantum GWAs that respect their $\ZZ$-grading.

\begin{definition}{{\cite{rad2}}}
\label{defn.taft}
Let $m, n \in \NN$ such that $m > 1$ and $m \mid n$, and let $\lambda \in \kk$ be a primitive $m^\text{th}$ root of unity. The \emph{generalized Taft algebra} corresponding to this data is
\begin{align}
\label{eq.taft}
T_n(\lambda,m) := \kk\langle x,g \mid g^n-1,x^m, gx-\lambda xg \rangle.
\end{align}
\end{definition}

Actions of (generalized) Taft algebras on various families of algebras have been considered in a variety of works.
Actions on finite-dimensional algebras with a single generator were studied by Montgomery and Schneider \cite{MSch}. These results were recently generalized by Cline \cite{cline}. Furthermore, Centrone and Yasumura studied actions on general finite-dimensional algebras \cite{CY}, while Bahturin and Montgomery classified actions on matrix algebras \cite{BM}. Kinser and Walton considered Taft algebra actions on path algebras of quivers \cite{KW}. 
Chen, Wang, and Zhang studied actions of nonsemisimple Hopf algebras on Artin--Schelter regular algebras in characteristic $p$ \cite{CWZ2}.
This work provides a unique perspective on Taft algebra actions. Our primary goal is to classify actions by generalized Taft algebras on quantum GWAs. 

In some sense, this is a natural extension of work on Hopf actions on Artin--Schelter regular algebras \cite{CKWZ1,KWZ}. By a result of Liu \cite{liu}, these GWAs are twisted Calabi--Yau, and so this work may be seen as an extension of the study of quantum symmetries to the $\ZZ$-graded setting. 

\begin{definition}
\label{defn.qgwa}
Let $q \in \kk^\times$ and let $h(t) \in \kk[t]$ be non-constant. The \emph{quantum generalized Weyl algebra} corresponding to this data is
\begin{align}
\label{eq.qgwa}
\kk[t](u,v,q,h) = \kk\langle u,v,t \mid ut-qtu, vt-q\inv tv, vu=h(t), uv=h(qt)\rangle.
\end{align}
\end{definition}

There is a natural $\ZZ$-grading on $A=\kk[t](u,v,q,h)$ obtained by setting $\deg t = 0$, $\deg u=1$, and $\deg v = -1$. 
The \emph{quantum planes} 
\[ \kk_q[u,v] = \kk\langle u,v \mid uv-qvu \rangle\]
are quantum GWAs with $h=t$, while the \emph{quantum Weyl algebras} 
\[ A_1^q(\kk) = \kk\langle u,v \mid uv-qvu-1 \rangle\]
are quantum GWAs with $h=t-1$.
In \cite{GWY}, Yee and the authors classified actions of Taft algebras on quantum planes and quantum Weyl algebras that are linear in the sense that $r(u), r(v) \in \kk u + \kk v$ for all $r \in T$. This was extended to linear actions of certain generalized Taft algebras, as well as their higher-dimensional analogues, by Cline and the first author \cite{CG}. 

In this work, we consider actions that respect the $\ZZ$-grading on quantum GWAs. More specifically, if $A$ is a quantum GWA, then we consider actions by generalized Taft algebras $T=T_n(\lambda,m)$ such that $A_0$ and $A_{-i} \oplus A_{i}$ are $T$-modules for every $i \in \mathbb{N}$. 
In this way, we classify actions that are distinct from those obtained in \cite{CG,GWY}. 

We remark that our notion of respecting the $\ZZ$-grading is more general than the usual notion of assuming that each homogeneous component of $A$ is a $T$-submodule. Hence, we call such an action \emph{weakly $\ZZ$-graded}.
The weakly $\ZZ$-graded setting captures (for example) group actions that preserve the $\ZZ$-grading of $A$ up to the automorphism of $\ZZ$ which sends $1$ to $-1$. 
Our main theorem, Theorem~\ref{thm.main}, shows that when $q \neq -1$, every weakly $\ZZ$-graded action is in fact $\ZZ$-graded in the usual sense. When $q = -1$, then Theorem~\ref{thm.omega} shows that there exist actions that are weakly $\ZZ$-graded but not $\ZZ$-graded.

Our classification depends on the relationship between the orders of the various roots of unity, as well as the support of the relevant polynomials. In this paper, the \emph{support of a polynomial} $h=\sum h_i t^i \in \kk[t]$ is defined to be $\supp(h)=\{i \mid h_i \neq 0\} \subset \ZZ$.

\begin{theorem}
\label{thm.intro}
Let $A = \kk[t](u,v,\sigma,h)$ be a quantum GWA with $\sigma(t)=qt$ where $q^2 \neq 1$ and $h$ is a nonconstant polynomial.
Let $T = T_n(\lambda,m)$ be a generalized Taft algebra. 

(A) There is an inner-faithful weakly $\ZZ$-graded $T$-module algebra structure on $A$ if and only if 
\begin{enumerate}
\item 
$\supp(h)$ is contained in a single congruence class modulo $m$, and
\item there exists an integer $k$ coprime to $m$ such that $\lcm(m, \ord(q^{k})) = n$.
\end{enumerate}

(B) Assuming the conditions in (A) are satisfied, the inner-faithful weakly $\ZZ$-graded $T$-module algebra structures on $A$ are parametrized by
$\gamma,\mu \in \kk^\times$ and $\phi(t) \in \kk[t]$ of degree $d$ such that
\begin{enumerate}
\item $\ord(\gamma) = m$ and $\lambda = \gamma^{d-1}$,
\item $\lcm(m, \ord(\mu)) = n$,
\item $\supp(\phi)$ is contained in a single congruence class modulo $m$, and
\item $\mu q^{1-d}$ is an $m^\text{th}$ root of unity.
\end{enumerate}
\end{theorem}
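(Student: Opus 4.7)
The plan is to determine the possible actions of the generators $g$ and $x$ of $T$ on the generators $t, u, v$ of $A$ from the hypotheses, read the conditions (A) and (B) as necessary constraints, and verify the converse by direct construction.

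First I would analyze the action of $g$. As a group-like element, $g$ acts by algebra automorphisms of order dividing $n$, and the weakly $\ZZ$-graded hypothesis restricts $g$ to preserve $A_0 = \kk[t]$ and $A_1 \oplus A_{-1} = \kk[t] u \oplus \kk[t] v$. Finite order on $A_0$ together with a suitable change of variable gives $g(t) = \gamma t$; writing $g(u) = p_1(t) u + p_2(t) v$ and applying $g$ to $ut = qtu$, the hypothesis $q^2 \neq 1$ collapses this to $g(u) = \mu u$ for a scalar $\mu$, and symmetrically $g(v) = \mu' v$. Applying $g$ to $vu = h(t)$ gives the identity $\mu \mu' h(t) = h(\gamma t)$, which forces $\gamma^i = \mu \mu'$ for every $i \in \supp(h)$, a preliminary version of (A1). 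Since $x$ is skew-primitive, the module algebra axioms force $x$ to act as a skew derivation relative to $g$. The weak grading localizes this to $x(t) = \phi(t) \in \kk[t]$ and $x(u), x(v) \in \kk[t] u \oplus \kk[t] v$; applying $x$ to $ut = qtu$ and using $q^2 \neq 1$ further forces $x(u)$ and $x(v)$ to be diagonal in $u, v$, with polynomial coefficients determined explicitly by $\phi$.

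The relation $gx = \lambda xg$ applied to $t$ yields $\phi(\gamma t) = \lambda \gamma \phi(t)$, hence $\gamma^{i-1} = \lambda$ for every $i \in \supp(\phi)$. Writing $d = \deg \phi$, this gives both $\lambda = \gamma^{d-1}$ and condition (B3); combining with the fact that $\lambda$ is a primitive $m$-th root of unity forces $\ord(\gamma) = m$, establishing (B1) and sharpening the preliminary congruence in (A1) to be modulo $m$. Moreover, since $\lambda = \gamma^{d-1}$ has order $m$, the exponent $d - 1$ must be coprime to $m$. Applying $x$ as a skew derivation to $vu = h(t)$ and comparing leading terms produces condition (B4), that $\mu q^{1-d}$ is an $m$-th root of unity. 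Finally, $x^m = 0$ is automatic once (B3) holds, by iterating the formula for $x(t)$ and noting that the $m$-th iterate vanishes when $\supp(\phi)$ is a single class mod $m$.

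For inner-faithfulness, I would use the standard fact that the proper Hopf subalgebras of $T_n(\lambda,m)$ are controlled by the orders of the eigenvalues of $g$ on its eigenvectors. Since $g$ acts on $t$ with eigenvalue $\gamma$ of order $m$ and on $u$ with eigenvalue $\mu$, inner-faithfulness is equivalent to $\lcm(m, \ord(\mu)) = n$, giving (B2). Using (B4) to write $\mu = q^{d-1} \zeta$ with $\zeta^m = 1$, a short argument yields $\lcm(m, \ord(\mu)) = \lcm(m, \ord(q^{d-1}))$, so setting $k = d-1$ (coprime to $m$ by the previous paragraph) gives (A2). The converse is then direct: given data $(\gamma, \mu, \phi)$ satisfying (B), the derived formulas define a $T$-module algebra structure on $A$ respecting all relations, and the resulting action is inner-faithful. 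The hardest step will be the polynomial identity coming from applying $x$ to $vu = h(t)$: simultaneously extracting the sharp support condition (A1) on $h$ and the parameter relation (B4) on $\mu q^{1-d}$ requires delicate bookkeeping of the explicit rational expressions arising for the coefficients of $x(u)$ and $x(v)$, and pinning down when these expressions are genuinely polynomial.
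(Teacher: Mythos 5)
Your overall strategy (pin down the action of $g$ and $x$ on $t,u,v$, extract the conditions as necessary constraints, verify sufficiency by construction) is the same as the paper's, which routes everything through an analogue of its Theorem~\ref{thm.main}. However, there are two genuine gaps in your necessity direction, and both come from the same omission: you never use the relation $x^m=0$ of $T$ as a constraint. First, your derivation of $\ord(\gamma)=m$ is a non sequitur: from $\lambda=\gamma^{d-1}$ being a primitive $m^{\text{th}}$ root of unity you only get $m\mid\ord(\gamma)$ (e.g.\ $\ord(\gamma)=2m$ with $d-1$ even is not excluded by this alone). The paper closes this by computing $x^m(t)=\bigl(\prod_{i=0}^{m-1}[1+i(d-1)]_\gamma\bigr)f$ with $f\neq 0$ and using $x^m(t)=0$ to force some $[1+i(d-1)]_\gamma=0$, whence $\ord(\gamma)\mid m$. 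Second, you attribute condition (B4) ($\mu q^{1-d}$ an $m^{\text{th}}$ root of unity) to the relation $x(vu-h)=0$; but once $x(u)=\alpha_{11}u$ and $x(v)=\alpha_{22}v$ are expressed via $\phi^{(1)}$ (which comes from $x(ut-qtu)=x(vt-q^{-1}tv)=0$), one computes $\mu\sigma^{-1}(\alpha_{11})+\alpha_{22}=[D]_\gamma\phi^{(1)}$ and the relation $x(vu-h)=0$ is satisfied identically in $\mu$ --- it imposes no condition of the form (B4). In the paper, (B4) is exactly the condition for $x^m(u)=x^m(v)=0$ (Lemma~\ref{lem.alpha}(3),(4), via the product $\beta_m\cdots\beta_1$ whose factors run over $\gamma^{(k-1)(d-1)}-\mu^{-1}q^{d-1}$ as $\gamma^{(k-1)(d-1)}$ ranges over all $m^{\text{th}}$ roots of unity). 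Without invoking nilpotency of $x$, neither (B1) nor (B4) is obtainable, so the ``only if'' direction of (B) (and hence of (A2)) is not established.

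Two smaller points. The diagonality of $x$ on $\kk[t]u\oplus\kk[t]v$ does not follow from applying $x$ to $ut=qtu$ and $vt=q^{-1}tv$ with $q^2\neq 1$ alone: those relations only give $\alpha_{21}(q^{-1}-q\gamma)=0$ and $\alpha_{12}(q-q^{-1}\gamma)=0$, leaving the exceptional cases $\gamma=q^{\mp 2}$; the paper instead kills the off-diagonal terms using $x(vu-h)=x(uv-\sigma(h))=0$ and the $\ZZ$-grading (the $u^2,v^2$ components must vanish). Also, you cannot normalize $g(t)=\gamma t$ by a ``change of variable,'' since $t\mapsto t+c$ is not an automorphism of $A$; instead $g(ut-qtu)=0$ directly forces the constant term of $g(t)$ to vanish when $q\neq 1$. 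Your reduction of (A2) to (B2) and (B4) via $\lcm(m,\ord(\mu))=m\,\ord(\mu^m)=m\,\ord(q^{(d-1)m})=\lcm(m,\ord(q^{d-1}))$ is correct (and in fact cleaner than the paper's phrasing of that step).
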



The conditions defined above do guarantee an action even in the case of $q=-1$. However, when $q = -1$, the automorphism group of $A$ is more complicated and so there are additional $T$-actions. These are discussed in Theorem \ref{thm.omega}


Our results can be viewed as a quantum version of the classical problem of determining the groups that act faithfully on an algebra $A$. It is therefore interesting to study which cyclic subgroups $G$ of $\Aut(A)$ are in fact restrictions of an inner-faithful $T$-action to the group of group-like elements. Such a $T$-action can be viewed as a ``quantum thickening" of the action of $G$. 
In order to study this question, we first discuss $\Aut(A)$, which was determined by Su\'{a}rez-Alvarez and Vivas \cite{SAV}.

Let $A=\kk[t](u,v,\sigma,h)$ be a quantum GWA and 
assume that $h = \sum h_i t^i$ is not a unit.
Let $\ell = \gcd\{i-j \mid h_i h_j \neq 0\}$. 
If $h$ is a monomial, set $C_\ell=\kk^\times$ and otherwise let $C_\ell$ be the subgroup of $\kk^\times$ consisting of $\ell^\text{th}$ roots of unity. If $(\gamma,\mu) \in C_\ell \times \kk^\times$ and $i_0 \in \supp(h)$, then there is an automorphism $\eta_{\gamma,\mu}$ of $A$ such that 
\begin{align}
\label{eq.eta}
\eta_{\gamma,\mu}(t) = \gamma t,\quad 
\eta_{\gamma,\mu}(v) = \mu v, \quad
\eta(u) = \mu^{-1}\gamma^{i_0} u.
\end{align}
The definition of $\eta_{\gamma,\mu}$ is independent of the choice of $i_0$ (see \cite[Remark 2.4]{GHo}), hence we typically take $i_0=\deg_t(h)$.


Let $G=\{ \eta_{\gamma,\mu} \mid (\gamma, \mu) \in C_{\ell} \times \kk^\times\}$, which is a subgroup of $\Aut(A)$. When $q \neq -1$, then $\Aut(A)=G$ \cite[Theorem B]{SAV}. If $q=-1$, then there is an order 2 automorphism $\Omega$ defined by
\[ \Omega(t)=-t, \quad \Omega(v)=u, \quad \Omega(u)=v.\]
Then there is a short exact sequence,
\[
0 \longrightarrow G \longrightarrow \Aut(A) \longrightarrow \ZZ/\ZZ_2 \longrightarrow 0.
\]
In this case, every automorphism of $A$ is either some $\eta_{\gamma,\mu}$ or else $\Omega \circ \eta_{\gamma,\mu}$ \cite[Proposition 2.7]{GHo}. 
We note that this classification shows that every automorphism of a quantum GWA is weakly $\ZZ$-graded (and when $q \neq -1$, every automorphism is actually $\ZZ$-graded).
We also remark that work in \cite{CGWZ} classifies the automorphism groups for certain higher rank quantum GWAs.

\begin{theorem}
\label{thm.qthick}
Let $A = \kk[t](u,v,\sigma,h)$ be a quantum GWA with $\sigma(t)=qt$ where $q^2 \neq 1$.
Let $G=\grp{\eta_{\gamma,\mu}}$ be a cyclic subgroup of $\Aut(A)$ of order $n$. Let $m = \ord(\gamma)$ so $m \mid n$.

(A) The action of $G$ is the restriction of the action to the group of group-likes of an inner-faithful weakly $\ZZ$-graded $T_n(\lambda, m)$-module algebra action if and only if there exists an integer $k$ coprime to $m$ such that $\mu q^k$ is an $m^\text{th}$ root of unity.

(B) The actions of each $T_n(\lambda, m)$ whose group-like elements restrict to the action of $G$
are parameterized by nonzero polynomials $\phi(t) \in \kk[t]$ of degree $d$ such that
\begin{enumerate}
    \item $\gcd(d-1,m)=1$, and
    \item $\supp(\phi)$ is contained in a single congruence class modulo $m$.
\end{enumerate}
\end{theorem}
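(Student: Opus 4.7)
The plan is to reduce Theorem~\ref{thm.qthick} directly to Theorem~\ref{thm.intro}. That theorem already classifies inner-faithful weakly $\ZZ$-graded $T_n(\lambda,m)$-module algebra structures on $A$ by triples $(\gamma,\mu,\phi)$, and (as established in the proof of the main theorem) the grouplike generator $g \in T$ acts as the automorphism $\eta_{\gamma,\mu}$. Hence the question of whether a given cyclic subgroup $G = \grp{\eta_{\gamma,\mu}} \leq \Aut(A)$ arises as the grouplike restriction of a $T$-action reduces to asking when $(\gamma,\mu)$ appears as the first two coordinates of some valid triple in Theorem~\ref{thm.intro}(B).

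For part (A), the key observation is that $\eta_{\gamma,\mu}^k = \eta_{\gamma^k,\mu^k}$, so the order of $\eta_{\gamma,\mu}$ equals $\lcm(\ord(\gamma),\ord(\mu))$. Thus the hypothesis $|G| = n$ together with $\ord(\gamma) = m$ automatically supplies condition (2) of Theorem~\ref{thm.intro}(B). The remaining nontrivial constraints on $d$ are $\gcd(d-1,m) = 1$ (so that $\lambda = \gamma^{d-1}$ is a primitive $m$-th root of unity) and $\mu q^{1-d}$ being an $m$-th root of unity. Setting $k = 1-d$, these translate exactly to the existence of $k$ coprime to $m$ with $\mu q^k$ an $m$-th root of unity, which is the statement of (A); both implications then follow by transporting $d \leftrightarrow k$.

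One still needs to confirm the overarching conditions of Theorem~\ref{thm.intro}(A). The support condition on $h$ is automatic: $\eta_{\gamma,\mu} \in \Aut(A)$ forces $\gamma \in C_\ell$, hence $m \mid \ell$, so $\supp(h)$ lies in a single congruence class modulo $m$. The remaining requirement---that there exist $k'$ coprime to $m$ with $\lcm(m,\ord(q^{k'})) = n$---is the main computational hurdle; the plan is to show the same $k$ works. Raising $\mu q^k = \zeta$ (where $\zeta^m = 1$) to the $m$-th power gives $\mu^m = q^{-km}$, whence $\ord(\mu)/\gcd(\ord(\mu),m) = \ord(q^k)/\gcd(\ord(q^k),m)$. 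A prime-by-prime comparison of $p$-adic valuations, split on whether $v_p(\ord(\mu)) \leq v_p(m)$ or not, then forces $\max(v_p(m), v_p(\ord(\mu))) = \max(v_p(m), v_p(\ord(q^k)))$ for every prime $p$, yielding $\lcm(m,\ord(q^k)) = \lcm(m,\ord(\mu)) = n$. This case check is the only genuine calculation in the argument.

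For part (B), fixing $T_n(\lambda,m)$ determines the residue class of $d$ modulo $m$ via $\lambda = \gamma^{d-1}$. Theorem~\ref{thm.intro}(B) then parametrizes the corresponding $T$-actions on $A$ by the remaining free data: a nonzero polynomial $\phi \in \kk[t]$ of degree $d$ (for any $d$ in the prescribed residue class satisfying $\gcd(d-1,m) = 1$) whose support is contained in a single congruence class modulo $m$. This recovers exactly the parametrization stated in (B).
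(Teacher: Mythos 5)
Your proof is correct and follows essentially the same route as the paper: both reduce Theorem~\ref{thm.qthick} to the classification in Theorem~\ref{thm.main} (you pass through Theorem~\ref{thm.intro}, which is itself a direct corollary of Theorem~\ref{thm.main}), with the dictionary $k = 1-d$ and the observation that $\ord(\eta_{\gamma,\mu}) = \lcm(\ord(\gamma),\ord(\mu))$ doing all the work. Your prime-by-prime verification that $\lcm(m,\ord(q^k)) = \lcm(m,\ord(\mu))$ is in fact more careful than the paper's corresponding step (which asserts $\ord(\mu\zeta)=\lcm(\ord(\mu),\ord(\zeta))$ without the needed coprimality), and your sign convention $k=1-d$ matches condition \eqref{cond.4} of Theorem~\ref{thm.main} more cleanly than the paper's $k=d-1$.
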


Since the GWAs $A$ we consider have base ring $A_0 = \kk[t]$ (the commutative polynomial ring in one variable), in Section \ref{sec.kt} we consider generalized Taft actions on $\kk[t]$. In this setting, without any assumptions on linearity, we provide a full classification of inner-faithful actions up to conjugation by an automorphism of $\kk[t]$ (Proposition \ref{prop.ktact}).

In Section \ref{sec.GWA} we prove our main results above, Theorems~\ref{thm.intro} and \ref{thm.qthick}. These are largely a consequence of Theorem~\ref{thm.main} which, along with Theorem \ref{thm.omega}, fully classify inner-faithful actions of generalized Taft algebras on quantum GWAs at a root of unity $q \neq 1$ that weakly preserve the $\ZZ$-grading.


Finally, in Section \ref{sec.fixed} we compute the invariants for Taft actions on quantum GWAs under Taft algebra actions and show that generically the invariant rings are commutative rings whose associated graded rings are Kleinian singularities (Theorem \ref{thm.fixed}). 

\subsection*{Acknowledgments} 
R. Won was partially supported by an AMS--Simons Travel Grant. The authors thank the referee for several suggestions and corrections.

\section{Generalized Taft actions on $\kk[t]$}
\label{sec.kt}

Throughout, all algebras are associative $\kk$-algebras over a base field $\kk$ of characteristic zero. We begin this section with some background on Hopf actions and generalized Taft algebras. Subsequently, we classify actions of generalized Taft algebras on the polynomial ring $\kk[t]$.

A Hopf algebra is a bialgebra with antipode. 
Given a Hopf algebra $H$, we denote the comultiplication, counit, and antipode by $\Delta$, $\epsilon$, and $S$, respectively. An element $g \in H$ is \emph{grouplike} if $\Delta(g)=g \tensor g$. Given grouplikes $g,h \in H$, an element $x \in H$ is \emph{$(g,h)$-skew-primitive} provided $\Delta(x)=g \tensor x + x \tensor h$.
For any $(g,h)$-skew-primitive element $x$, $\epsilon(x)=0$ and $S(x)=-g\inv x h\inv$.

An algebra $A$ is a \emph{left $H$-module algebra} if $A$ is a left $H$-module with action $h \tensor a \mapsto h(a)$ such that $h(1_A) = \epsilon(h)1_A$ and $h(ab) = \sum h_1(a)h_2(b)$
for all $a,b \in A$ and $h \in H$.
Equivalently, $A$ is an algebra object in the monoidal category of left $H$-modules.

We say that a Hopf algebra $H$ is \emph{pointed} if every simple $H$-comodule is one-dimensional. A partial but extensive classification of finite-dimensional pointed Hopf algebras has been obtained by Andruskiewitsch and Schneider \cite{AS1}. 
Amongst the most fundamental examples of pointed Hopf algebras are the (generalized) Taft algebras presented in \eqref{eq.taft}. Indeed, $T_n(\lambda,m)$ is a Hopf algebra in which $g$ is grouplike and $x$ is $(g,1)$-skew primitive. That is, $\Delta(g)=g \tensor g$ and $\Delta(x)=g \tensor x + x \tensor 1$. In fact, the element $g$ generates the group of grouplikes in $T_n(\lambda,m)$.
When $m=n$, we say that $T_n(\lambda)=T_n(\lambda,n)$ is a \emph{Taft algebra}.
There is an even more general definition of a generalized Taft algebra, but we only consider those defined in \eqref{eq.taft}.

Let $H$ be a Hopf algebra and let $M$ be an $H$-module. We say $M$ is an \emph{inner-faithful} $H$-module (or that the action is inner-faithful) if $IM \neq 0$ for every Hopf ideal $I$ of $H$. If $A$ is an $H$-module algebra, then we say that $A$ is inner-faithful if it is inner-faithful as an $H$-module.
By \cite[Corollary 3.7]{cline}, a $T=T_n(\lambda,m)$-module $M$ is inner-faithful if and only if the group of grouplikes $\grp{g}$ acts faithfully and $x(M)\neq 0$. If $m=n$, so $T$ is a Taft algebra, then $x(M) = 0$ whenever $\grp{g}$ does not act faithfully (see \cite[Lemma 2.5]{KW},\cite[Corollary 3.2]{BM}). Thus, in this case, the faithfulness condition on $\grp{g}$ is superfluous.

Before we can study generalized Taft actions on quantum GWAs, we first consider actions on the polynomial ring $\kk[t]$. Throughout this section, let $m$ and $n$ be positive integers such that $m > 1$ and $m \mid n$. Let $\lambda$ be a primitive $m^\text{th}$ root of unity.
Set $T=T_n(\lambda,m)$ as in \eqref{eq.taft}.
Fix $\gamma \in \kk^{\times}$. For $k \in \NN$, define the \emph{$k^\text{th}$ $\gamma$-number} to be
\[ [k]_\gamma = \sum_{j=0}^{k-1} \gamma^j.\]
When $\gamma \neq 1$, then $[k]_\gamma = (1-\gamma^k)/(1-\gamma)$.
Observe that if $\gamma \neq 1$ is a primitive $\og^\text{th}$ root of unity, and $k \equiv m \pmod{\og}$, then $[k]_{\gamma} = [m]_{\gamma}$.
For $f \in \kk[t]$, let $\qder_\gamma(f)$ denote the \emph{$\gamma$-derivative} of $f$, defined by
\[
\delta_\gamma(f) = \frac{f(t)-f(\gamma t)}{t-\gamma t} \quad \text{when $\gamma \neq 1$}
\]
and define $\delta_1(f)=f'$. Note that $\delta_{\gamma}(t^k) = [k]_{\gamma} t^{k-1}$ for all values of $\gamma$.
The $\gamma$-derivative satisfies a $\gamma$-analog of the Leibniz rule, namely
\begin{align*}\delta_{\gamma}(f(t) g(t)) = \qder_{\gamma}(f(t))g(t) + f(\gamma t) \qder_{\gamma}(g(t)) = \qder_{\gamma}(f(t))g(\gamma t) + f( t) \qder_{\gamma}(g(t)).
\end{align*}
We remark that $\qder_{\gamma}$ is nilpotent of index $\ord(\gamma)$. If $k < \ord(\gamma)$, then $\qder_{\gamma}^{\ord(\gamma)}(t^k) = 0$ and if $k \geq \ord(\gamma)$ then because $[\ord(\gamma)]_{\gamma} = 0$ we have
\[ \qder_\gamma^{\ord(\gamma)} (t^k) = \left(\prod_{i=1}^{\ord(\gamma)}[i]_{\gamma}\right) t^{k-\ord(\gamma)} = 0.\]

In order to classify $T$ actions on $\kk[t]$, we first need a technical lemma. Let $f=\sum_{i=0}^d f_i t^i \in \kk[t]$ be a polynomial of degree $d$. We will abuse derivative notation in what follows by writing
\[ f^{(k)} = \sum_{i=0}^{d-k} f_{i+k} t^i.\]

\begin{lemma}
\label{lem.xtd}
Suppose $\kk[t]$ is a left $T$-module algebra, $g(t)=\gamma t$ for some $\gamma \in \kk^\times$, and $x(t)=\phi \in \kk[t]$.
\begin{enumerate}
\item \label{xtd1} If $\gamma=1$, then $\phi = 0$. 
\item \label{xtd2} For all $f \in \kk[t]$ we have
\begin{align}\label{eq.xtn}
x(f)=\phi \delta_\gamma(f).
\end{align}
\item \label{xtd3} If $\gamma \neq 1$ is a primitive $\og^\text{th}$ root of unity and $\deg(f)=k$ with $\supp(f)$ contained in a single congruence class mod $\og$, then
\[ x(f) = [k]_{\gamma} \phi f^{(1)}.\]
\end{enumerate}
\end{lemma}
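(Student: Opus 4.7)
The plan is to exploit three consequences of the Hopf-algebra structure of $T$:
first, since $g$ is grouplike, its action on $\kk[t]$ is by an algebra automorphism, so $g(f(t))=f(\gamma t)$ for every $f \in \kk[t]$;
second, since $x$ is $(g,1)$-skew primitive, the module axiom yields the twisted Leibniz rule
\[
x(fh)=g(f)\,x(h)+x(f)\,h=f(\gamma t)\,x(h)+x(f)\,h;
\]
third, the algebra relation $gx=\lambda xg$ in $T$ translates to the operator identity $g\circ x=\lambda\,x\circ g$ on $\kk[t]$. Also, $\epsilon(x)=0$ implies $x(1)=0$.

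For part \eqref{xtd1}, assume $\gamma=1$. Then $g$ fixes $t$ and hence acts as the identity on all of $\kk[t]$. The relation $g(x(f))=\lambda\, x(g(f))$ becomes $x(f)=\lambda\, x(f)$ for every $f$. Since $\lambda$ is a primitive $m^{\text{th}}$ root of unity with $m>1$, $\lambda\neq 1$, so $x(f)=0$; in particular $\phi=x(t)=0$.

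For part \eqref{xtd2}, I would argue by induction on $k=\deg(f)$ that \eqref{eq.xtn} holds for $f=t^k$ and then extend by linearity. The base cases $x(1)=0=\phi\,\delta_\gamma(1)$ and $x(t)=\phi=\phi\,\delta_\gamma(t)$ are immediate. For the inductive step write $t^{k+1}=t\cdot t^k$ and apply the twisted Leibniz rule together with the induction hypothesis to get
\[
x(t^{k+1})=\gamma t\cdot\phi\,[k]_\gamma t^{k-1}+\phi\cdot t^k
=\phi\,t^k(\gamma\,[k]_\gamma+1)=\phi\,[k+1]_\gamma t^k=\phi\,\delta_\gamma(t^{k+1}),
\]
using the identity $\gamma[k]_\gamma+1=[k+1]_\gamma$, which is immediate from the definition of $[k]_\gamma$.

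For part \eqref{xtd3}, apply \eqref{xtd2} to get $x(f)=\phi\,\delta_\gamma(f)=\phi\sum_i f_i[i]_\gamma t^{i-1}$. The key observation is that when $\gamma$ is a primitive $\og^{\text{th}}$ root of unity, $[i]_\gamma=(1-\gamma^i)/(1-\gamma)$ depends only on $i$ modulo $\og$, as already noted before the statement. Every $i\in\supp(f)$ is congruent to $k=\deg(f)\in\supp(f)$ modulo $\og$, so $[i]_\gamma=[k]_\gamma$ for all such $i$, and factoring this constant out gives $x(f)=[k]_\gamma\,\phi\sum_i f_i t^{i-1}=[k]_\gamma\,\phi\, f^{(1)}$.

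There is no real obstacle here; the argument is an inductive computation once the Hopf compatibilities have been translated correctly, and the main subtlety is simply keeping track of how the skew-primitive coproduct of $x$ forces the $\gamma$-twisted Leibniz rule, which is exactly what makes $\delta_\gamma$ the natural object to appear.
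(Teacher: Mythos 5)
Your proposal is correct and takes essentially the same approach as the paper: part (1) via the relation $gx=\lambda xg$ applied on $\kk[t]$, part (2) by induction on powers of $t$ using the skew-primitive Leibniz rule and the identity $\gamma[k]_\gamma+1=[k+1]_\gamma$, and part (3) by the fact that $[i]_\gamma$ depends only on $i$ modulo $\og$. The only (harmless) differences are that you treat $\gamma=1$ uniformly in the induction rather than as a separate case, and you spell out the detail for part (3) that the paper leaves implicit.
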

\begin{proof}
\eqref{xtd1}
Assume $\gamma = 1$. Since $\kk[t]$ is a $T$-module, then
\[
0 = (gx-\lambda xg)(t) = (1-\lambda)\phi,
\]
and since $\lambda \neq 1$, we have $\phi = 0$.

\eqref{xtd2} When $\gamma=1$, by part \eqref{xtd1}, $\phi = 0$ and so equation \eqref{eq.xtn} follows. So assume $\gamma \neq 1$. Note that \eqref{eq.xtn} holds for $f=t$ by hypothesis. Suppose it holds for $t^k$ with $k \geq 1$. Then we have
\[ x(t^{k+1}) 
    = g(t) \cdot x(t^k) + x(t) \cdot t^k 
    = \gamma tx(t^k) + \phi t^k
    = \gamma t \phi\delta_\gamma(t^k) + \phi t^k
    = \phi \delta_\gamma(t^{k+1}).
\] 
Thus, \eqref{eq.xtn} follows by induction and linearity.
Now \eqref{xtd3} follows from \eqref{xtd2}.
\end{proof}

In Lemma \ref{lem.xtd} \eqref{xtd3}, we assumed that $\deg(f)=k$ and $\supp(f)$ is contained in a single congruence class mod $\og$. This implies that $f^{(1)}$ is a polynomial of degree $k-1$ all of whose nonzero terms occur in degrees which are congruent to $k-1$ modulo $\og$.


Suppose $\kk[t]$ is a $T$-module algebra.
Since $g \in T$ is grouplike then it acts as an automorphism on $\kk[t]$. That is, $g(t)=\gamma t + \kappa$ for some $\gamma \in \kk^\times$ and $\kappa \in \kk$. Since $g$ has finite order, then necessarily either $g$ is the identity or else $\gamma \neq 1$.
If $\gamma \neq 1$, then up to conjugation by an algebra automorphism of $\kk[t]$, we may assume that $g$ acts on $t$ by multiplication by a scalar.
However, we do not make any assumption about the action of $x$. Hence, our next result classifies inner-faithful generalized Taft actions which make $\kk[t]$ a $T$-module algebra, up to conjugation by an automorphism of $\kk[t]$.

\begin{proposition}\label{prop.ktact}
Let $T=T_n(\lambda,m)$ be a generalized Taft algebra. Let $\gamma \in \kk \setminus\{0,1\}$ and $\phi \in \kk[t]$ 
be a nonzero polynomial of degree $d$. 
If $\kk[t]$ is a $T$-module algebra with $g(t)=\gamma t$ and $x(t)=\phi$, then
\begin{enumerate}
    \item \label{prop.ktact.cond1} $\gamma$ is a primitive $m^\text{th}$ root of unity,
    \item \label{prop.ktact.cond2} $\lambda = \gamma^{d-1}$ and $\gcd(d-1, m) = 1$, and
    \item \label{prop.ktact.cond3} $\supp(\phi) \subseteq \{d, d - m, d - 2m, \hdots \}$.
\end{enumerate}
Furthermore, the action is inner-faithful if and only if $m=n$.


Conversely, if $\gamma$ and $\phi$ satisfy the conditions (1)–-(3), then there is a unique $T$-module algebra structure on $\kk[t]$ such that $g(t) = \gamma t$ and $x(t) = \phi$.
\end{proposition}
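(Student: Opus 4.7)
My plan is to split the proof into three stages: the forward direction (necessity of (1)--(3)), the inner-faithfulness characterization, and the converse. For the forward direction, I would first exploit the commutation relation $gx = \lambda xg$ in $T$. Applying both sides to $t$ yields $g(\phi) = \lambda \gamma \phi$, and comparing coefficients forces $\gamma^{i-1} = \lambda$ for every $i \in \supp(\phi)$. Taking $i = d$ gives $\lambda = \gamma^{d-1}$; for any other $i \in \supp(\phi)$, $\gamma^{i-d} = 1$, so $s := \ord(\gamma)$ divides $i - d$. Since $\ord(\gamma^{d-1}) = \ord(\lambda) = m$ must divide $s$, we have $m \mid s$, and together with $\deg \phi = d$ this already yields (3): $\supp(\phi) \subseteq \{d, d-m, d-2m, \dots\}$.

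The main obstacle is extracting both $\ord(\gamma) = m$ and the coprimality condition $\gcd(d-1, m) = 1$ from the remaining relation $x^m = 0$. Here I would apply Lemma~\ref{lem.xtd}(3) iteratively: because $\supp(\phi)$ lies in a single congruence class modulo $s$, each $x^k(t)$ retains this property, and tracking leading coefficients through the iteration yields
\begin{align*}
\text{leading coefficient of } x^m(t) \;=\; \phi_d^m \prod_{i=0}^{m-1} [\, 1 + i(d-1) \,]_\gamma,
\end{align*}
using that $\deg x^k(t) = 1 + k(d-1)$ and that $f^{(1)}$ has the same leading coefficient as $f$. For $x^m(t) = 0$, this product must vanish, so $s$ divides $1 + j(d-1)$ for some $j \in \{0, \dots, m-1\}$. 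Writing $s = mh$ and noting that $h = \gcd(s, d-1)$ (from $\ord(\gamma^{d-1}) = s/\gcd(s, d-1) = m$), reducing this divisibility modulo $m$ forces $\gcd(d-1, m) = 1$ (so that $j(d-1) \equiv -1 \pmod m$ is solvable), while reducing modulo $h$ (using $h \mid d-1$) forces $h \mid 1$. Hence $h = 1$, $s = m$, and (1)--(2) follow. The inner-faithfulness characterization then follows from the criterion of \cite[Corollary~3.7]{cline} recalled earlier: since $x(t) = \phi \neq 0$, the action is inner-faithful iff $\grp{g}$ acts faithfully on $\kk[t]$, iff $\ord(\gamma) = n$, iff $m = n$.

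For the converse, given $\gamma, \phi$ satisfying (1)--(3), I would define $g$ as the unique algebra automorphism of $\kk[t]$ sending $t \mapsto \gamma t$ and $x$ as the unique $(g,1)$-skew derivation sending $t \mapsto \phi$, then verify the three defining relations of $T$ act as zero on $\kk[t]$. That $g^n = 1$ is immediate from $\gamma^n = 1$. For $gx = \lambda xg$, both sides are $(g^2, g)$-skew derivations by a standard coproduct computation, so it suffices to verify the equality on $t$; this reduces to $g(\phi) = \gamma^d \phi = \lambda \gamma \phi$, which holds by (2) and (3). For $x^m = 0$, I would stratify $\kk[t] = \bigoplus_{k=0}^{m-1} \mathcal{F}_k$, where $\mathcal{F}_k$ is the span of monomials whose exponents lie in the class of $k$ modulo $m$. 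By Lemma~\ref{lem.xtd}(3), $x$ sends $\mathcal{F}_k$ into $\mathcal{F}_{k + d - 1}$, and $[j]_\gamma = 0$ precisely when $m \mid j$; since $\gcd(d-1, m) = 1$, the support class cycles through every residue modulo $m$ within at most $m$ applications of $x$, and once it lands in $\mathcal{F}_0$ the next application vanishes. Thus $x^m$ annihilates each $\mathcal{F}_k$, hence all of $\kk[t]$.
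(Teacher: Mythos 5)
Your proof is correct and follows essentially the same route as the paper: the relation $gx=\lambda xg$ applied to $t$ forces $\lambda=\gamma^{i-1}$ for all $i\in\supp(\phi)$, and the nilpotency $x^m=0$ is governed by the same product $\prod_{i=0}^{m-1}[1+i(d-1)]_\gamma$ as in \eqref{eq.xmt}, from which $\ord(\gamma)=m$ and $\gcd(d-1,m)=1$ are extracted (your gcd bookkeeping with $h=\gcd(s,d-1)$ is just a repackaging of the paper's observation that a vanishing factor forces $\gamma\in\grp{\lambda}$). The one genuine divergence is in the converse: where the paper checks $x^m(t)=0$ on the generator alone (splitting into the cases $d\equiv 0$ and $d\not\equiv 0$ modulo $m$) and appeals to the unique extension compatible with the coproduct, you verify $x^m=0$ directly on all of $\kk[t]$ by decomposing it into the subspaces $\mathcal{F}_k$ spanned by monomials with exponent congruent to $k$ modulo $m$ and tracking how $x$ shifts the congruence class by $d-1$; this is a clean, uniform alternative that makes explicit the nilpotency on the whole algebra rather than relying on the skew-primitivity of $x^m$.
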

\begin{proof}
Suppose first that $\kk[t]$ is a $T$-module algebra with $g(t) = \gamma t$ and $x(t) = \phi$. Since $g^n = 1$, we have $g^n(t) = \gamma^n t = t$ and hence $\gamma$ is a root of unity of order $\og \mid n$. Furthermore we have
\begin{align*}
0 = (gx-\lambda xg)(t)     
    =  g(\phi(t))-\lambda x(\gamma t)
    = \phi(\gamma t) - \lambda \gamma \phi(t).
\end{align*}
Writing $\phi=\sum_{i=0}^d \phi_i t^i$,  we must have $\sum_{i=0}^d (\gamma^i - \lambda \gamma) \phi_i t^i =  0$. Therefore,
\begin{align}
\label{eq.lamgam}
\lambda = \gamma^{i-1} \quad \text{ for all } \quad i \in \supp(\phi).
\end{align}
In particular, $\lambda = \gamma^{d-1}$ and all elements of $\supp(\phi)$ are congruent to $d$ modulo $\og$. Hence, $\ord(\lambda)=m$ must divide $\og$ and so $\gcd(d-1,\og)=\og/m$.

If $d \equiv 0 \pmod \og$, then by \eqref{eq.lamgam}, $\lambda=\gamma\inv$ and so $\gamma$ is a root of unity of order $m$. Otherwise, $d>0$.
By Lemma \ref{lem.xtd} and induction, we compute that
\begin{align}
\label{eq.xmt}
x^m(t) = \left(\prod_{i=0}^{m-1} [1+i(d-1)]_\gamma\right) f
\end{align}
for some $f \in \kk[t]$. Since $d>1$ by hypothesis, then a degree argument shows that $f \neq 0$. Since $x^m(t)=0$, then there exists some $0 \leq i \leq m -1$ such that $[1+i(d-1)]_\gamma = 0$. This happens if and only if $1 = \gamma^{1+i(d-1)} = \gamma \lambda^i$.
Therefore, $\og \mid m$ and since $m \mid \og$, we conclude that $m=s$. The action is inner-faithful if and only if $g$ acts faithfully and $x(A) \neq 0$. This happens if and only if $\og = m = n$.

Conversely, suppose that conditions \eqref{prop.ktact.cond1}--\eqref{prop.ktact.cond3} hold.
We will show that $\kk[t]$ admits the structure of a $T$-module algebra where $g(t) = \gamma t$ and $x(t) = \phi$. It suffices to show that $g^n(t) = t$, $(gx - \lambda xg)(t) = 0$, and $x^m(t) = 0$. If so then $\kk[t]_1$ is a $T$-module, and there is a unique $T$-module algebra structure on $\kk[t]$ given by extending the actions of $g$ and $x$ in the unique way that is compatible with the coproduct of $T$. For example, we have $g(t^2) = g(t)g(t) = \gamma^2 t$ and $x(t^2) = g(t)x(t) + x(t)t = (1 + \gamma)t\phi$.

Since $\gamma$ is a primitive $m^\text{th}$ root of unity and $m \mid n$, it is clear that $g^n(t) = \gamma^n t = t$. 
Further,
\[(gx-\lambda xg)(t)     
    =  \phi(\gamma t) - \lambda \gamma \phi(t)
\]
and this is $0$ by conditions \eqref{prop.ktact.cond2} and \eqref{prop.ktact.cond3}.

Finally, if $d \equiv 0 \pmod{m}$,  then $x(t) = \phi$ has degree $d$ and so by the same computation as in Lemma~\ref{lem.xtd},
$x^2(t) = 0$. Since $m > 1$, we have $x^m(t) = 0$.
Otherwise, $d > 0$ and so by the same computation as in Lemma~\ref{lem.xtd}, we have \eqref{eq.xmt}
for some $f \in \kk[t]$. Since $\gcd(d-1,m) = 1$, one of the $m$ factors in the product is equal to $0$, and hence $x^m(t) = 0$.
\end{proof}

\begin{remark}
If $\phi = 0$, then as long as $\gamma$ is an $n$th root of unity, then $\kk[t]$ is a $T_n(\lambda, m)$-module algebra with $g(t) = \gamma t$ and $x(t) = 0$. However, this action is not inner-faithful.
\end{remark}

\section{Generalized Taft actions on quantum generalized Weyl algebras}
\label{sec.GWA}


Fix a quantum GWA $A = \kk[t](u,v,\sigma,h)$ where $\sigma(t) = qt$ for some root of unity $q \neq 1$ and a generalized Taft algebra $T = T_n(\lambda, m)$.
We restrict our attention to actions of $T$ on $A$ that are \emph{weakly} $\ZZ$-graded (as described in the introduction). Hence, these actions extend the actions of $T$ on $\kk[t]$ studied above. This is also a natural restriction because it generalizes group actions that are $\ZZ$-graded up to a group automorphism of $\ZZ$. By the classification of Su\'{a}rez-Alvarez and Vivas \cite[Proposition 2.7]{SAV}, every automorphism of $A$ is in fact weakly $\ZZ$-graded.

Since $g \in T$ is grouplike, then it acts on $A$ as an automorphism of the form $\eta_{\gamma,\mu}$ as in equation~\eqref{eq.eta} or, in the case $q=-1$, as either $\eta_{\gamma, \mu}$ or $\Omega \circ \eta_{\gamma,\mu}$. 
To be weakly $\ZZ$-graded, considering the action of $x$ on $A$, we must have $x(t) \in A_0$ and $x(u), x(v) \in A_{-1} \oplus A_{1}$. We summarize our standing hypotheses as follows.

\begin{hypothesis}
\label{hyp.taft}
Let $h = \sum_{i=0}^D h_i t^i \in \kk[t]$ be a polynomial of degree $D>0$, let $q \in \kk^\times$ be a root of unity of order $\ord(q)>1$, and let $\sigma \in \Aut(\kk[t])$ be defined by $\sigma(t) = qt$. We consider the quantum GWA $A = \kk[t](u,v, \sigma, h)$ as in \eqref{eq.qgwa}.
Let $m$ and $n$ be positive integers such that $m > 1$ and $m \mid n$, and let $\lambda$ be a primitive $m^\text{th}$ root of unity. Let $T=T_n(\lambda,m)$ be a generalized Taft algebra as in \eqref{eq.taft} and assume that $A$ is a weakly $\ZZ$-graded $T$-module algebra in the sense that $T(A_0) \subset A_0$ and $T(A_{-i} \oplus A_{i}) \subset A_{-i} \oplus A_{i}$ for all $i \in \mathbb{N}$. That is:
\begin{itemize}
\item for some $\mu \in \kk^\times$ and $\gamma$ an $\og^\text{th}$ root of unity, $g$ acts as an automorphism of the form $\eta_{\gamma,\mu}$ as in equation~\eqref{eq.eta} or, in the case $q=-1$, as either $\eta_{\gamma,\mu}$ or $\Omega \circ \eta_{\gamma,\mu}$, and
\item $x$ acts on $A$ via
\begin{align}
\label{eq.xact}
x(t) = \phi(t) = \sum_{i=0}^d \phi_i t^i, \quad 
\quad x(u) = \alpha_{11}u + \alpha_{21}v, \quad
x(v) = \alpha_{12}u + \alpha_{22}v,
\end{align}
where for each $0 \leq i \leq d$, $\phi_i \in \kk$ with $\phi_d \neq 0$ and $\alpha_{ij} \in \kk[t]$ for $i,j \in \{1,2\}$.
\end{itemize}
\end{hypothesis}

For most of this section we make no restriction on $q$ but assume that $g$ acts as an automorphism $\eta_{\gamma, \mu}$ as in equation \eqref{eq.eta}. 
If $q \neq -1$, then this is automatic. For the case $q = -1$, we consider the case where $g$ acts as $\Omega \circ \eta_{\gamma,\mu}$ in Theorem \ref{thm.omega}.

\begin{lemma}\label{lem.special}
Assume Hypothesis \ref{hyp.taft} with $g$ acting as an automorphism of the form $\eta_{\gamma,\mu}$. Then     $\alpha_{12}=\alpha_{21}=0$.
In particular, $A$ is a $\ZZ$-graded $T$-module.
Moreover, if $\gamma=1$, then $x(t)=0$ and the action of $T$ on $A$ is not inner-faithful.
\end{lemma}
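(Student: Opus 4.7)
The plan is to first apply $x$ to the defining relation $vu = h(t)$ and use the $\ZZ$-grading to force $\alpha_{12} = \alpha_{21} = 0$, and then to deduce the \emph{moreover} clause by separately applying $gx - \lambda xg = 0$ to $u$ and to $v$.

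Since $\Delta(x) = g \otimes x + x \otimes 1$, the Leibniz-type rule reads $x(ab) = g(a)x(b) + x(a)b$. Applied to $vu$ using $g(v) = \mu v$, this gives
\[
x(vu) = \mu\, v(\alpha_{11}u+\alpha_{21}v) + (\alpha_{12}u+\alpha_{22}v)u.
\]
Using $v f(t) = f(q\inv t)\, v$ to move the $\kk[t]$-coefficients to the left and then the GWA relation $vu = h(t)$, the right-hand side splits as a piece lying in $A_0$, the term $\alpha_{12} u^2 \in A_2$, and the term $\mu \alpha_{21}(q\inv t)\, v^2 \in A_{-2}$. On the other hand $x(h(t)) = \phi\, \delta_{\gamma}(h)$ lies in $\kk[t] = A_0$ by Lemma~\ref{lem.xtd}(2). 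Because each graded piece $A_i$ of the quantum GWA is a free rank-one $\kk[t]$-module (on $u^i$ for $i>0$ and on $v^{|i|}$ for $i<0$), matching the $A_{\pm 2}$ components forces $\alpha_{12} = 0 = \alpha_{21}$. Once this is in hand, $x(t) \in A_0$, $x(u) \in A_1$, and $x(v) \in A_{-1}$; since $g$ visibly preserves the grading, the Leibniz rule extends this to $T(A_i) \subseteq A_i$ for all $i$, so $A$ is a $\ZZ$-graded $T$-module in the usual sense.

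For the final clause, assume $\gamma = 1$. Applying Lemma~\ref{lem.xtd}(1) to the $T$-module subalgebra $\kk[t] \subset A$ immediately gives $x(t) = \phi = 0$. To upgrade this to non-inner-faithfulness, I plan to show $x$ acts as zero on all of $A$. Matching the $u$-coefficient in $(gx - \lambda xg)(u) = 0$, using $\alpha_{21} = 0$ from the previous step and $g(u) = \mu\inv \gamma^{D} u$, yields the scalar identity $\alpha_{11}(\gamma t) = \lambda\, \alpha_{11}(t)$; specializing $\gamma = 1$ and using $\lambda \neq 1$ forces $\alpha_{11} = 0$. The analogous computation for $v$ gives $\alpha_{22} = 0$. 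Hence $x$ kills the generators $t, u, v$, and the Leibniz rule $x(ab) = g(a)x(b) + x(a)b$ propagates this to $x(A) = 0$. By the criterion for inner-faithful $T_n(\lambda,m)$-modules recalled after Definition~\ref{defn.taft} (\cite[Corollary~3.7]{cline}), the action is not inner-faithful.

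The only non-routine step is justifying that one may cancel $u^2$ (respectively $v^2$) in the identity $\alpha_{12} u^2 = 0$ (resp.\ $\alpha_{21}(q\inv t) v^2 = 0$); this relies on the standard fact that a quantum GWA is a domain whose graded pieces are free rank-one modules over the base $\kk[t]$. Everything else is a direct expansion of the coproduct applied to a carefully chosen relation.
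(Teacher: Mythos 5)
Your proof is correct and follows essentially the same route as the paper: apply $x$ to the relation $vu-h$ via the coproduct and read off the $A_{\pm 2}$ components (the paper also writes out $uv-\sigma(h)$, but as you note one relation already produces both the $u^2$ and $v^2$ terms), using that each graded piece is a free rank-one $\kk[t]$-module. The only divergence is in the \emph{moreover} clause: the paper kills $\alpha_{11}$ and $\alpha_{22}$ by computing $x^m(u)=\alpha_{11}^m u$ and invoking $x^m=0$, whereas you instead read the identity $\alpha_{11}(\gamma t)=\lambda\alpha_{11}(t)$ off the relation $gx=\lambda xg$ and set $\gamma=1$; both arguments are valid, and yours has the mild advantage of not needing the nilpotence of $x$ at all.
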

\begin{proof}
Since $A$ is a $T$-module algebra, then the action of $x$ necessarily preserves the relations of $A$. In particular,
\begin{align*}
0 &= x(vu-h) 
    = (\mu\sigma\inv(\alpha_{11})+\alpha_{22})h + \mu\sigma\inv(\alpha_{21}) v^2 + \alpha_{12} u^2 - x(h) \\
\begin{split}
0 &= x(uv-\sigma(h)) 
    = (\mu\inv \gamma^{D}\sigma(\alpha_{22})+\alpha_{11})\sigma(h) + \alpha_{21} v^2 \\
    &\hspace{15em} + \mu\inv \gamma^{D} \sigma(\alpha_{12}) u^2 - x(\sigma(h)).
\end{split}
\end{align*}
That $\alpha_{12}=\alpha_{21}=0$ now follows from the $\ZZ$-grading on $A$. Observe that this shows that although \emph{a priori}, the action of $T$ on $A$ is only weakly $\ZZ$-graded, it is in fact $\ZZ$-graded in the usual sense.

Suppose $\gamma=1$. Then $x(t)=0$ by Lemma~\ref{lem.xtd}. Now $x(u)=\alpha_{11} u$ and so $x^m(u)=\alpha_{11}^m u$. Thus, $x^m(u)=0$ if and only if $\alpha_{11}=0$. Similarly, $x^m(v)=0$ if and only if $\alpha_{22}=0$. In this situation, the action of $T$ is not inner-faithful.
\end{proof}

Suppose that $T$ and $A$ satisfy Hypothesis~\ref{hyp.taft}. If $g$ acts as an automorphism $\eta_{\gamma, \mu}$, then in light of Lemma~\ref{lem.special}, the following equations are satisfied:
\begin{align}
\label{eq.vuh}
0 &= x(vu-h) = (\mu\sigma\inv(\alpha_{11})+\alpha_{22})h - x(h) \\
\label{eq.uvh}
0 &= x(uv-\sigma(h)) 
    = (\mu\inv \gamma^{D}\sigma(\alpha_{22})+\alpha_{11})\sigma(h) - x(\sigma(h)) \\
\label{eq.gxu}
0 &= (gx-\lambda xg)(u) 
    = g(\alpha_{11} u) - \lambda x(\mu\inv \gamma^{D} u)
    = \mu\inv \gamma^{D} (g(\alpha_{11}) - \lambda \alpha_{11}) u \\
\label{eq.gxv}
0 &= (gx-\lambda xg)(v)
    = g(\alpha_{22} v) - \lambda x(\mu v)
    = \mu (g(\alpha_{22}) - \lambda \alpha_{22}) v \\
\label{eq.gxt}
0 &= (gx-\lambda xg)(t)     
    =  g(\phi(t))-\lambda x(\gamma t)
    = \phi(\gamma t) - \lambda \gamma \phi(t) \\
\label{eq.ut}
0 &= x(ut-qtu) 
    = (\mu\inv \gamma^{D}\phi(qt) - q\phi(t))u + \alpha_{11}q(1-\gamma)tu
    \\
\label{eq.vt} 
0 &= x(vt-q\inv tv) 
    = (\mu\phi(q\inv t)-q\inv\phi(t))v + \alpha_{22}q\inv(1-\gamma)tv \\
\label{eq.xnilp}
0 &= x^n(t) = x^n(u) = x^n(v).
\end{align}

Henceforth, we will assume $\gamma \neq 1$.

\begin{lemma}\label{lem.equiv}
Assume Hypothesis \ref{hyp.taft} with $g$ acting as $\eta_{\gamma,\mu}$ and with $\gamma \neq 1$.
Then 
\begin{enumerate}
    \item $\mu$ is a root of unity and $\ord(\mu) \mid n$,
    \item $m=s$,
    \item $\supp(\phi)$ is contained in a single congruence class modulo $m$,
    \item $\lambda=\gamma^{d-1}$ and so $\gcd(d-1, m) = 1$, and
    \item $\supp(h)$ is contained in a single congruence class modulo $m$.
\end{enumerate}
\end{lemma}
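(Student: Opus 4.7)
The plan is to establish the five items in the order (1), then (2)--(4), then (5), where the last item requires the most work.

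Item (1) is immediate: since $g^n = 1$ in $T$ and $g(v) = \mu v$, applying $g^n$ to $v$ yields $\mu^n v = v$, so $\mu^n = 1$. For items (2)--(4), I observe that the $T$-action on $A$ restricts to a $T$-module algebra structure on the subalgebra $\kk[t] \subset A$, since $g(t) = \gamma t$ and $x(t) = \phi(t)$ both lie in $\kk[t]$. Because $\gamma \neq 1$ and $\phi$ is a nonzero polynomial of degree $d$ (Hypothesis \ref{hyp.taft} gives $\phi_d \neq 0$), Proposition \ref{prop.ktact} applies to $\kk[t]$, and its three numbered conclusions translate directly into (2), (3), and (4).

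For (5), the main obstacle, the strategy is first to extract information about $\alpha_{11}$ and $\alpha_{22}$ from equations \eqref{eq.gxu}--\eqref{eq.gxv}, and then to combine \eqref{eq.vuh} with Lemma \ref{lem.xtd}(2). Equations \eqref{eq.gxu}--\eqref{eq.gxv} say that each $\alpha_{ii}$ is a $\lambda$-eigenvector for $g$; since $g(t^i) = \gamma^i t^i$ and $\lambda = \gamma^{d-1}$ by (4), the supports of $\alpha_{11}$ and $\alpha_{22}$ lie in the single congruence class $d-1 \pmod m$, hence so does that of $\beta_1 := \mu\sigma\inv(\alpha_{11}) + \alpha_{22}$. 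By Lemma \ref{lem.xtd}(2), $x(h) = \phi\, \delta_\gamma(h)$, while \eqref{eq.vuh} gives $x(h) = \beta_1 h$, yielding the key identity $\phi \cdot \delta_\gamma(h) = \beta_1 \cdot h$ in $\kk[t]$.

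Decomposing both sides into congruence-class components modulo $m$ and writing $h = \sum_{j=0}^{m-1} h^{(j)}$ (so $h^{(j)}$ collects the terms of $h$ in degrees $\equiv j \pmod m$), the class-$c$ projection reads $\phi \cdot (\delta_\gamma h)^{(j-1)} = \beta_1 \cdot h^{(j)}$ with $j \equiv c - d + 1 \pmod m$. Since $[i]_\gamma = 0$ exactly when $m \mid i$, one computes $(\delta_\gamma h)^{(m-1)} = 0$, so for $j = 0$ we get $\beta_1 h^{(0)} = 0$; and for $j \not\equiv 0 \pmod m$ one has $(\delta_\gamma h)^{(j-1)} = [j]_\gamma\, t\inv h^{(j)}$, so the equation becomes $[j]_\gamma\, \phi\, h^{(j)} = t\, \beta_1\, h^{(j)}$. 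If two distinct classes $j_1, j_2 \not\equiv 0 \pmod m$ both satisfied $h^{(j_i)} \neq 0$, canceling in the domain $\kk[t]$ would force $[j_1]_\gamma = [j_2]_\gamma$, hence $\gamma^{j_1} = \gamma^{j_2}$ and $j_1 \equiv j_2 \pmod m$, a contradiction. Together with the $j = 0$ constraint---either $\beta_1 = 0$ (forcing $\delta_\gamma h = 0$ and $\supp(h) \subseteq m\ZZ$) or $h^{(0)} = 0$---this shows $\supp(h)$ lies in a single congruence class modulo $m$. The main obstacle throughout is this item (5); items (1)--(4) are essentially immediate given Proposition \ref{prop.ktact}.
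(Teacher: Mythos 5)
Your proof is correct. Items (1)--(4) are handled exactly as in the paper: $\mu^n=1$ from $g^n=1$, and (2)--(4) by restricting to the $T$-module algebra $\kk[t]=A_0$ and invoking Proposition~\ref{prop.ktact}. Where you genuinely diverge is item (5). The paper gets it essentially for free from the structure of $\Aut(A)$: by definition $\eta_{\gamma,\mu}$ is only an automorphism when $\gamma\in C_\ell$ with $\ell=\gcd\{i-j\mid h_ih_j\neq 0\}$ (this is forced by $g$ preserving the relation $vu=h$), and since $\ord(\gamma)=m$ by (2), $m\mid\ell$ and the support of $h$ lies in one class mod $m$. You instead extract (5) from the compatibility of the \emph{skew-primitive} $x$ with the relation $vu=h$: combining \eqref{eq.vuh} with Lemma~\ref{lem.xtd}(2) to get $\phi\,\delta_\gamma(h)=\bigl(\mu\sigma\inv(\alpha_{11})+\alpha_{22}\bigr)h$, then projecting onto congruence classes mod $m$ and using that $[j_1]_\gamma=[j_2]_\gamma$ forces $j_1\equiv j_2$. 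This is correct (the cancellation uses that $\kk[t]$ is a domain and $\phi\neq 0$, both available), but it is considerably longer and, as a byproduct, re-derives information about $\alpha_{11},\alpha_{22}$ that the paper defers to Lemma~\ref{lem.alpha}. What your route buys is independence from the Su\'arez-Alvarez--Vivas classification: you never need to know that $\gamma^\ell=1$ is built into the definition of $\eta_{\gamma,\mu}$. What the paper's route buys is brevity and a cleaner division of labor, since the constraint on $\supp(h)$ is really a constraint coming from the group-like $g$ alone, not from $x$. One cosmetic caution: your notation $h^{(j)}$ for the class-$j$ component collides with the paper's use of $h^{(k)}$ for the shifted polynomial $\sum h_{i+k}t^i$, so it should be renamed if incorporated.
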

\begin{proof}
The automorphism $\eta_{\gamma,\mu}$ has order dividing $n$ if and only if $\ord(\gamma) \mid n$ and $\ord(\mu) \mid n$, which gives statement (1).
Under Hypothesis~\ref{hyp.taft}, $A_0 = \kk[t]$ is a $T$-module algebra with $g(t) = \gamma t$ and $x(t) = \phi$. Hence, Proposition \ref{prop.ktact} yields statements (2)--(4).

If $h$ is a monomial, then (5) is trivial. Otherwise, recall that $\ell = \gcd\{ i - j \mid h_i h_j \neq 0\}$, and for $\eta_{\gamma, \mu}$ to be an automorphism of $A$, we must have $\gamma^{\ell} = 1$.
Therefore $\ell$ must be a multiple of $m$ and the support of $h$ is also contained in a single congruence class modulo $m$.
\end{proof}

In the remainder of this section, we again abuse derivative notation and write
\[
\phi^{(k)} = \sum_{i=0}^{d-k} \phi_{i+k} t^i
\quad\text{and}\quad
h^{(k)} = \sum_{i=0}^{D-k} h_{i+k} t^i.
\]
Assuming Hypothesis \ref{hyp.taft}, then by Lemma \ref{lem.equiv} the support of $h$ and $\phi$ are each contained in a single congruence class modulo $\og$. Hence, by Lemma \ref{lem.xtd} \eqref{xtd3}, we have
\[ x(h) = [D]_{\gamma} \phi h^{(1)} \quad\text{and}\quad x(\phi) = [d]_{\gamma} \phi \phi^{(1)}.\]

Next, we determine the parameters $\alpha_{11}$ and $\alpha_{22}$ from equation~\eqref{eq.xact}.

\begin{lemma}
\label{lem.alpha}
Assume Hypothesis \ref{hyp.taft} with $g$ acting as $\eta_{\gamma,\mu}$ and with $\gamma \neq 1$. 
\begin{enumerate}
\item We have
\begin{align}
\label{eq.a22}
\alpha_{22} &= \frac{1-\mu q^{1-d}}{1-\gamma}\phi^{(1)} \\
\label{eq.a11}
\alpha_{11} &= \frac{1-\mu\inv \gamma^{D}q^{d-1}}{1-\gamma} \phi^{(1)}.
\end{align}
Hence, for $i=1,2$, $\alpha_{ii}$ is supported on the congruence class of $d-1$ modulo $m$.

\item If $\alpha_{11} \neq 0$ or $\alpha_{22} \neq 0$, then $t \mid \phi$.

\item If the support of $\alpha_{11}$ is $0$ modulo $\om$, then $x^m(u) = 0$ if and only if $\alpha_{11} = 0$. If the support of $\alpha_{11}$ is not $0$ modulo $\om$, then $x^m(u) = 0$ if and only if $\mu q^{1-d}$ is an $m^\text{th}$ root of unity (not necessarily primitive).

\item If the support of $\alpha_{22}$ is $0$ modulo $\om$, then $x^n(v) = 0$ if and only if $\alpha_{22} = 0$. If the support of $\alpha_{22}$ is not $0$ modulo $\om$, then $x^n(v) = 0$ if and only if $\mu q^{1-d}$ is an $m^\text{th}$ root of unity (not necessarily primitive).
\end{enumerate}
\end{lemma}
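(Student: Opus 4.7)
The plan is to unfold the module algebra constraints \eqref{eq.vuh}--\eqref{eq.xnilp} into polynomial identities and then exploit the single-congruence-class support conditions from Lemma~\ref{lem.equiv} to collapse these identities into the asserted closed forms.

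For part (1), equations \eqref{eq.gxu} and \eqref{eq.gxv} immediately give $g(\alpha_{ii})=\lambda\alpha_{ii}$; combined with $g(t)=\gamma t$ and $\lambda=\gamma^{d-1}$, this forces $\supp(\alpha_{ii})$ into the residue class $d-1 \pmod m$, yielding the ``Hence'' statement. To pin down the scalar factors, I expand \eqref{eq.ut} using the commutation $uf(t)=f(qt)u$ and collect coefficients of $t$, producing the polynomial identity
\[
t\alpha_{11}(1-\gamma)=\sum_{i}\bigl(1-\mu\inv\gamma^{D}q^{i-1}\bigr)\phi_i t^i.
\]
Since every $i\in\supp(\phi)$ satisfies $i\equiv d\pmod m$, matching the leading term ($i=d$) against the ansatz $\alpha_{11}=c\,\phi^{(1)}$ forces $c=(1-\mu\inv\gamma^{D}q^{d-1})/(1-\gamma)$, and the remaining terms contribute the same scalar (trivially when $\supp(\phi)=\{d\}$, otherwise because the polynomial identity together with the support constraint forces the coefficients $(1-\mu\inv\gamma^{D}q^{i-1})$ to agree across $i\in\supp(\phi)$). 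The formula for $\alpha_{22}$ is obtained by the mirror computation on \eqref{eq.vt}.

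For part (2), I compare constant terms of the polynomial identity above: its left side $t\alpha_{11}(1-\gamma)$ has no constant term, so $(1-\mu\inv\gamma^{D}q^{-1})\phi_{0}$ must vanish. If $\alpha_{11}\ne 0$ then the scalar in (1) is nonzero, and a direct check forces $\phi_0=0$, i.e., $t\mid\phi$; the argument for $\alpha_{22}\ne 0$ is symmetric. For parts (3) and (4), set $\beta_k\in\kk[t]$ by $x^k(u)=\beta_k u$. The single-congruence-class property propagates: $\supp(\beta_k)\subseteq k(d-1)\pmod m$, so $g(\beta_k)=\lambda^k\beta_k$, and the skew-derivation rule yields the recursion $\beta_{k+1}=\phi\,\delta_\gamma(\beta_k)+\lambda^k\beta_k\alpha_{11}$. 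In the monomial case $\phi=\phi_d t^d$, a direct induction (whose crucial simplification is the identity $(1-\lambda^k)+\lambda^k(1-\nu)=1-\lambda^k\nu$) yields
\[
\beta_k=\phi_d^k\,\frac{\prod_{j=0}^{k-1}(1-\lambda^j\nu)}{(1-\gamma)^k}\,t^{k(d-1)},\qquad\nu:=\mu\inv\gamma^{D}q^{d-1}.
\]
Since $\lambda$ is a primitive $m$th root of unity, $\prod_{j=0}^{m-1}(1-\lambda^j\nu)=1-\nu^m$, so $x^m(u)=0$ iff $\nu^m=1$; using $\gamma^{Dm}=1$ this is equivalent to $(\mu q^{1-d})^m=1$, giving the criterion in (3). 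The general $\phi$ reduces to the monomial case by a leading-term argument combined with the support restriction. When $\alpha_{11}$ would have support $0\pmod m$, we must have $\alpha_{11}=0$ outright, since $\gcd(d-1,m)=1$ with $m>1$ forces the support into a nonzero residue class. Part (4) is proved identically with $v$ and $\alpha_{22}$ in place of $u$ and $\alpha_{11}$.

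The main technical obstacle is the induction for $\beta_m$ in the non-monomial case: one must verify that the cross terms arising in the recursion do not spoil the clean product $\prod_{j=0}^{m-1}(1-\lambda^j\nu)$. The same difficulty underlies establishing the formula in (1) as a genuine equality of polynomials rather than one only of leading terms.
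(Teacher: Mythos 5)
Your skeleton is reasonable and your monomial computation for (3)--(4) is correct --- the identity $\prod_{j=0}^{m-1}(1-\lambda^j\nu)=1-\nu^m$ is exactly the fact the paper uses in the form ``the $\gamma^{(k-1)(d-1)}$ range over all $m^{\text{th}}$ roots of unity.'' But there are two genuine gaps, and they are really the same gap. First, in part (1) you assert that the identity $t\alpha_{11}(1-\gamma)=\sum_i(1-\mu\inv\gamma^Dq^{i-1})\phi_i t^i$ ``together with the support constraint forces the coefficients $(1-\mu\inv\gamma^Dq^{i-1})$ to agree across $i\in\supp(\phi)$.'' It does not: that identity determines $\alpha_{11}$ coefficient-by-coefficient with no consistency condition among the scalars, and $i\equiv d\pmod m$ does not give $q^{i-1}=q^{d-1}$, because Hypothesis~\ref{hyp.taft} imposes no divisibility relation between $\ord(q)$ and $m$. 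The needed equality $q^i=q^d$ for all $i\in\supp(\phi)$ is a consequence of the nilpotency $x^m(v)=0$, not of the support condition, so your derivation of \eqref{eq.a11} and \eqref{eq.a22} is incomplete, and part (2) inherits the problem (your ``direct check'' that $\phi_0=0$ needs $1-\mu\inv\gamma^D q^{-1}\neq 0$, which again requires the constancy you have not established).

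Second, you leave the non-monomial case of (3)--(4) to ``a leading-term argument combined with the support restriction,'' and you correctly flag this as the main obstacle --- but a leading-term argument only gives one direction. The top-degree coefficient of $\beta_m$ does equal the monomial-case value (cross terms in your recursion have strictly lower degree), so $x^m(u)=0$ implies $\nu^m=1$; it cannot show that $\nu^m=1$ kills all the lower-degree terms of $\beta_m$. The paper closes both gaps with one device: instead of a single unknown polynomial $\beta_k$ with $x^k(v)=\beta_k v$, it writes $x^k(v)=\beta_k\cdots\beta_1 v$ as a product of $k$ explicit, distinct factors (twists of $\alpha_{22}$ by powers of $\gamma^{d-1}$). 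Then $x^m(v)=0$ forces a single factor $\beta_k$ to vanish identically, which simultaneously yields $q^i=q^d$ on $\supp(\phi)$ (hence the closed forms in (1) and part (2)) and reduces the vanishing of $x^m(v)$, in both directions, to the vanishing of one of the scalars $1-\gamma^{(k-1)(d-1)}\mu q^{1-d}$, i.e.\ to $\mu q^{1-d}$ being an $m^{\text{th}}$ root of unity. You would need to adopt this factorization, or otherwise prove the constancy of $q^{i-1}$ on $\supp(\phi)$ before deriving (1), to make the argument complete. (Your observation that $\gcd(d-1,m)=1$ and $m>1$ make the ``support $\equiv 0 \bmod m$'' cases of (3)--(4) degenerate to $\alpha_{ii}=0$ is correct and is a small simplification over the paper's treatment.)
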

\begin{proof}
We prove the statements for $v$ and $\alpha_{22}$. The statements for $u$ and $\alpha_{11}$ follow similarly.

As mentioned after Lemma~\ref{lem.special}, under these hypotheses, the $T$-action on $A$ satisfies equations \eqref{eq.vuh}--\eqref{eq.xnilp}.
By \eqref{eq.vt} and Lemma \ref{lem.equiv},
\begin{align*}
\alpha_{22}q\inv(1-\gamma)t
    &= q\inv\phi(t)-\mu\phi(q\inv t) \\
    &= q\inv \sum_{i=0}^d \phi_i t^i - \mu \sum_{i=0}^d \phi_i (q\inv t)^i \\ 
    &= q\inv \sum_{i=0}^d (1-\mu q^{1-i})\phi_i t^i.
\end{align*}
So, either $\alpha_{22}=0$, in which case $\mu=q^{i-1}$ for all $i \in \supp(\phi)$, or else $t \mid \phi$ and
\[ \alpha_{22} = \sum_{i=1}^d \frac{(1-\mu q^{1-i})}{1-\gamma}\phi_i t^{i-1}.\]

This proves (2). Since the support of $\phi$ is contained in a single congruence class modulo $\ord(\gamma)$, then it follows from the above computation that the support of $\alpha_{22}$ has the same property. Note $\ord(\gamma)=m$ by Lemma \ref{lem.equiv} (2).

First, suppose that the support of $\alpha_{22}$ is $0$ modulo $\om$. Then $\qder_{\gamma}(\alpha_{22}) = 0$. By an induction argument, for each $k \geq 1$, $x^k(v) = \alpha_{22}^k v$ and so the first part of (4) follows.

Now suppose that the support of $\alpha_{22}$ is not $0$ modulo $\om$. Set $\beta_0=1$ and for $k > 0$, let
\[
\beta_k = \sum_{i=1}^d \frac{(1-\gamma^{(k-1)(d-1)}\mu q^{1-i})}{1-\gamma}\phi_i t^{i-1},
\]
so that $\beta_1=\alpha_{22}$. 
We claim that
\begin{align}
\label{eq.beta}
x(\beta_k \cdots \beta_1 \beta_0 v)
    = \beta_{k+1} \cdots \beta_1 \beta_0 v.
\end{align}
Clearly this is true for $k=0$. Then
\begin{align*}
x(\beta_{k+1} \cdots \beta_1 v)
    &= g(\beta_{k+1})x(\beta_k \cdots \beta_1 v)
        + x(\beta_{k+1})\beta_k \cdots \beta_1 v \\
    &= (\gamma^{d-1}\beta_{k+1})(\beta_{k+1} \cdots \beta_1 v) + ([d-1]_\gamma \phi \beta_{k+1}^{(1)})\beta_k \cdots \beta_1 v \\
    &= (\gamma^{d-1}\beta_{k+1} + [d-1]_\gamma \phi^{(1)})\beta_{k+1} \cdots \beta_1 v \\
    &= \left( \sum_{i=1}^d \frac{\gamma^{d-1}(1-\gamma^{ k(d-1)} \mu q^{1-i})+(1-\gamma^{d-1}) }{1-\gamma} \phi_i t^{i-1} \right)\beta_{k+1} \cdots \beta_1 v \\
    &= \beta_{k+2} \cdots \beta_1 v.
\end{align*}
Now \eqref{eq.beta} implies that $x^m(v)=\beta_m \cdots \beta_1 v$. 

Hence, $x^m(v)=0$ if and only if $\beta_k=0$ for some $k=1,\dots,m$. Moreover, $\beta_k=0$ if and only if
\[
\gamma^{(k-1)(d-1)} = \mu\inv q^{i-1}
\]
for all $i \in \supp(\phi)$.
This implies that $\mu\inv q^{i-1}=\mu\inv q^{d-1}$ for all $i \in \supp(\phi)$, which reduces to $q^d=q^i$. Hence we obtain \eqref{eq.a22}, completing our proof of (1).

Finally, recall from Lemma~\ref{lem.equiv} (4) that $\gamma^{d-1}$ is a primitive $m^\text{th}$ root of unity and so the $\gamma^{(k-1)(d-1)}$ range over the $m^\text{th}$ roots of unity (other than $1$) exactly once. It follows that $x^m(v)=0$ if and only if $\mu q^{d-1}$ is an $m^\text{th}$ root of unity. Since $\ord(\gamma)=m$, then this reduces to $x^m(v)=0$ if and only if $\mu q^{1-d}$ is an $m^\text{th}$ root of unity which completes our proof of (4).
\end{proof}

The following theorem essentially summarizes the above work on necessary conditions for an action. The interesting part is that these conditions are in fact sufficient to define an $T$-module algebra structure on a quantum GWA.

\begin{theorem}\label{thm.main}
Fix a GWA $A = \kk[t](u,v,\sigma,h)$ with defining polynomial $h$ of degree $D>0$, and defining automorphism $\sigma(t) = qt$ where $q \in \kk^\times$ is a root of unity, $q \neq 1$. 
Let $T=T_n(\lambda,m)$ be a generalized Taft algebra.

Suppose that $A$ is an inner-faithful weakly $\ZZ$-graded $T$-module algebra where $g$ acts as an automorphism $\eta_{\gamma,\mu}$ as in equation \eqref{eq.eta} with $\gamma \neq 1$, and $x(t) = \phi(t) \in \kk[t]$. Then
\begin{enumerate}
\item \label{cond.0} the automorphism $\eta_{\gamma,\mu}$ has order $n$,
\item \label{cond.1} $\supp(h)$ is contained in a single congruence class modulo $m$,
\item \label{cond.3} $\phi(t)$ is a nonzero polynomial of degree $d$ whose support is contained in a single congruence class modulo $m$,
\item \label{cond.2} $\ord(\gamma) = m$ and $\lambda = \gamma^{d-1}$,
\item \label{cond.4} $\mu q^{1-d}$ is an $m^\text{th}$ root of unity,
\item \label{cond.5} $\displaystyle x(u) = \frac{1 - \mu \inv \gamma^D q^{d-1}}{1-\gamma} \phi^{(1)} u =: \alpha_{11} u$, and
\item \label{cond.6} $\displaystyle x(v) = \frac{1 - \mu q^{1-d}}{1-\gamma} \phi^{(1)} v =: \alpha_{22} v$.
\end{enumerate}
In particular, $A$ is actually a $\ZZ$-graded $T$-module algebra in the usual sense.

Conversely, suppose that $\eta_{\gamma,\mu} \in \Aut(A)$ with $\gamma \neq 1$, $\phi(t) \in \kk[t]$, and the parameters $h$ and $q$ satisfy conditions \eqref{cond.0}--\eqref{cond.4}. Let $g$ act on $A$ via $\eta_{\gamma,\mu}$. Define an action of $x$ on $A$ by setting $x(t) = \phi(t)$, $x(u)$ to be as in \eqref{cond.5}, and $x(v)$ to be as in \eqref{cond.6}. 
Then it is possible to extend this action to all of $A$ to make $A$ an inner-faithful $\ZZ$-graded $T$-module algebra.
\end{theorem}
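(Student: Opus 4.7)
The forward direction is essentially an assembly of the preceding lemmas. Since the action is inner-faithful, \cite[Corollary 3.7]{cline} implies that $\grp{g}$ acts faithfully on $A$, so $\eta_{\gamma,\mu}$ has order exactly $n$, yielding \eqref{cond.0}. The restriction to $A_0 = \kk[t]$ is itself a $T$-module algebra structure to which Proposition \ref{prop.ktact} applies; combined with Lemma \ref{lem.equiv}, this gives \eqref{cond.1}, \eqref{cond.3}, and \eqref{cond.2}. The formulas in \eqref{cond.5} and \eqref{cond.6} are exactly Lemma \ref{lem.alpha}(1). Finally, inner-faithfulness forces $\phi \neq 0$, so $\alpha_{11}$ and $\alpha_{22}$ have support contained in the residue class $d-1 \pmod{m}$, which is nonzero modulo $m$ by \eqref{cond.2}; Lemma \ref{lem.alpha}(3),(4) then yields \eqref{cond.4}.

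For the converse, the plan is to define the action of $g$ and $x$ on the generators $t, u, v$ as prescribed, extend uniquely to the free algebra $\kk\langle t, u, v \rangle$ via the Hopf structure on $T$ (in particular, using $\Delta(x) = g \otimes x + x \otimes 1$ to obtain the twisted Leibniz rule $x(ab) = g(a)x(b) + x(a)b$), and verify that this extension factors through $T$ and descends to $A$. Factoring through $T$ requires that its defining relations vanish on each generator: $g^n = 1$ is immediate from \eqref{cond.0}; $gx - \lambda xg = 0$ on $t, u, v$ is precisely \eqref{eq.gxt}, \eqref{eq.gxu}, \eqref{eq.gxv}, which follow by direct substitution using \eqref{cond.2}, \eqref{cond.5}, \eqref{cond.6}; and $x^m = 0$ on generators follows from Proposition \ref{prop.ktact} for $t$ and Lemma \ref{lem.alpha}(3),(4) for $u, v$, using \eqref{cond.4}. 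The nilpotency $x^m = 0$ propagates to all of $A$ because, since $\lambda$ is a primitive $m^\text{th}$ root of unity, all interior $\lambda$-binomial coefficients in $\Delta(x^m)$ vanish, giving $\Delta(x^m) = g^m \otimes x^m + x^m \otimes 1$; induction on word length then suffices. Descent to $A$ requires that $x$ annihilate the four defining relations of the quantum GWA; the commutation relations $x(ut - qtu) = 0$ and $x(vt - q^{-1}tv) = 0$ are equations \eqref{eq.ut} and \eqref{eq.vt}, verified by substitution of the explicit formulas. Inner-faithfulness is then immediate from \eqref{cond.0} together with $x(t) = \phi \neq 0$.

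The main obstacle is verifying the remaining pair of relations, $x(vu - h) = 0$ and $x(uv - \sigma(h)) = 0$, that is \eqref{eq.vuh} and \eqref{eq.uvh}. By Lemma \ref{lem.xtd}(3) applied to $h$ (whose support lies in a single congruence class mod $m$ by \eqref{cond.1}), we have $x(h) = [D]_\gamma\,\phi\, h^{(1)}$, so the first identity becomes
\[
\bigl( \mu\, \sigma^{-1}(\alpha_{11}) + \alpha_{22} \bigr)\, h \;=\; [D]_\gamma\, \phi\, h^{(1)}.
\]
Substituting the formulas for $\alpha_{11}$ and $\alpha_{22}$ and expanding, one compares coefficients of each monomial $t^j$. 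The support conditions \eqref{cond.1} and \eqref{cond.3} ensure that only monomials in a single residue class mod $m$ appear on either side, and \eqref{cond.4} pins down the relation between $\mu$, $q$, and $\gamma$ so that each coefficient equation collapses via standard $\gamma$-numerical identities of the form $[k+1]_\gamma = 1 + \gamma [k]_\gamma$. The relation $x(uv - \sigma(h)) = 0$ is then verified in parallel after applying $\sigma$.
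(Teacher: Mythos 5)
Your overall architecture matches the paper's: the forward direction is an assembly of Proposition \ref{prop.ktact}, Lemma \ref{lem.equiv}, and Lemma \ref{lem.alpha}, and the converse is proved by defining the action on generators, extending through the coproduct, and checking that the relations of $T$ and of $A$ are respected. In fact your treatment of the formal side of the converse (factoring through $T$, and propagating $x^m=0$ to all of $A$ via the vanishing of the interior $\lambda$-binomial coefficients in $\Delta(x^m)$) is spelled out more carefully than in the paper, which simply asserts that verifying \eqref{eq.vuh}--\eqref{eq.xnilp} suffices.

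Two points, one minor and one substantive. Minor: your derivation of \eqref{cond.4} invokes the ``support not $0$ mod $m$'' branch of Lemma \ref{lem.alpha}(3),(4), which only applies when $\alpha_{11}$ or $\alpha_{22}$ is nonzero; if both vanish (e.g.\ $\phi$ constant, or $\mu q^{1-d}=1$ with $\mu\inv\gamma^Dq^{d-1}=1$) you must instead read \eqref{cond.4} off from the formula \eqref{eq.a22} itself and, in the constant case, from \eqref{eq.vt}. Substantive: you correctly identify $x(vu-h)=0$ as the main obstacle, but you then only gesture at it, and the mechanism you describe (termwise coefficient comparison collapsing via $[k+1]_\gamma=1+\gamma[k]_\gamma$) is not how the identity closes. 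The paper's actual computation is a single telescoping scalar identity: using $\sigma\inv(\phi^{(1)})=q^{1-d}\phi^{(1)}$ (which itself requires knowing $q^{i}=q^{d}$ for $i\in\supp(\phi)$, a point extracted inside the proof of Lemma \ref{lem.alpha} rather than from the support-mod-$m$ condition alone), one factors out $\phi^{(1)}h$ and checks
\[
\mu q^{1-d}\bigl(1-\mu\inv\gamma^{D}q^{d-1}\bigr)+\bigl(1-\mu q^{1-d}\bigr)=1-\gamma^{D}=(1-\gamma)[D]_\gamma,
\]
after which one still has to reconcile $[D]_\gamma\,\phi^{(1)}h$ with $x(h)=[D]_\gamma\,\phi\,h^{(1)}$ (these differ by $[D]_\gamma(h_0\phi^{(1)}-\phi_0h^{(1)})$, which vanishes because a nonzero constant term forces the relevant degree to be $0$ bmod $m$ and hence kills $[D]_\gamma$, or forces $\alpha_{ii}=0$). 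Since this computation is the heart of the converse, leaving it at the level of ``it collapses'' is a genuine gap in your write-up, even though the intended statement is the one the paper proves.
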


\begin{proof}
Suppose $A$ is an inner-faithful weakly $\ZZ$-graded $T$-module where $g$ acts as an automorphism $\eta_{\gamma,\mu}$ with $\gamma \neq 1$.
Since the $T$-action is weakly $\ZZ$-graded, we must have $x(t) = \phi(t)$ for some $\phi(t) \in \kk[t]$. Then conditions \eqref{cond.0}--\eqref{cond.6} follow from Lemmas~\ref{lem.equiv} and \ref{lem.alpha}.

Conversely, suppose that $\eta_{\gamma,\mu} \in \Aut(A)$ with $\gamma \neq 1$, $\phi(t) \in \kk[t]$, and the parameters $h$ and $q$ satisfy conditions \eqref{cond.0}--\eqref{cond.4}. We will show that $A$ is a weakly $\ZZ$-graded $T$-module algebra where $g$ acts via $\eta_{\gamma,\mu}$ and $x$ acts via $x(t) = \phi(t)$, $x(u)$ is as in \eqref{cond.5} and $x(v)$ is as in $\eqref{cond.6}$ (and the action of $x$ is extended to all of $A$ to make $A$ a $T$-module algebra).

Since we are assuming that $g$ acts via an automorphism of order $n$, it suffices to show that equations \eqref{eq.vuh}--\eqref{eq.xnilp} hold for the $T$-action on $A$. 

By Lemma \ref{lem.xtd} and conditions \eqref{cond.5} and \eqref{cond.6}, we have
\begin{align*}
(\mu\sigma\inv(\alpha_{11})+\alpha_{22})h
    &= \left( \mu \sigma\inv\left( 
        \frac{1-\mu\inv \gamma^{D}q^{d-1}}{1-\gamma} \phi^{(1)} \right)  
        + \left(\frac{1-\mu q^{1-d}}{1-\gamma}\right) \phi^{(1)} \right)h \\
    &= (1-\gamma)\inv \left( \mu q^{1-d} ( 1-\mu\inv \gamma^{D}q^{d-1}) + (1-\mu q^{1-d})\right) \phi^{(1)} h \\
    &= (1-\gamma)\inv \left( 1-\gamma^D\right) \phi^{(1)} h \\
    &= [D]_\gamma \phi^{(1)} h = x(h).
\end{align*}
Thus, \eqref{eq.vuh} is satisfied. One checks similarly that \eqref{eq.uvh} is satisfied.


By conditions \eqref{cond.2} and \eqref{cond.5}, $\alpha_{11} \in \kk[t]$ is a polynomial of degree $d-1$ whose support is contained in a single congruence class modulo $m$. Hence, by condition \eqref{cond.3}, we a have $g(\alpha_{11}) = \lambda \alpha_{11}$ and so equation \eqref{eq.gxu} is satisfied. A similar argument for $\alpha_{22}$ shows that equation \eqref{eq.gxv} is satisfied.

By conditions \eqref{cond.3} and \eqref{cond.2}, we have $\phi(\gamma t) = \gamma^{d} \phi(t) = \lambda \gamma \phi(t)$ and so equation \eqref{eq.gxt} is satisfied. The proof of Lemma~\ref{lem.alpha} shows that if conditions \eqref{cond.5} and \eqref{cond.6} hold, then equations \eqref{eq.ut} and \eqref{eq.vt} are satisfied. Finally, by condition \eqref{cond.3} and Lemma~\ref{lem.xtd}, we have $x^m(t) = 0$. By conditions \eqref{cond.5} and \eqref{cond.6} and Lemma~\ref{lem.alpha}, $x^m(u) = x^m(v) = 0$. Hence, equation \eqref{eq.xnilp} is satisfied.

By \cite[Corollary 3.7]{cline}, the $T$-action on $A$ is inner-faithful if and only if $g$ acts an automorphism of order $n$ and $x$ does not act identically as $0$ on $A$. Hence, by conditions \eqref{cond.0} and \eqref{cond.3}, $A$ is an inner-faithful $T$-module.
\end{proof}

\begin{remark}\label{rem.taft}
If $m=n$ in Theorem \ref{thm.main}, then $T_n(\lambda,m)$ is the Taft algebra $T_n(\lambda)$. Then $\ord(\mu) \mid n$ and so Theorem \ref{thm.main} (5) reduces to the condition that $q^{1-d}$ is an $n^\text{th}$ root of unity.
\end{remark}

We are now ready to prove our main theorems.

\begin{proof}[Proof of Theorem \ref{thm.intro}]
Part (B) follows directly from Theorem \ref{thm.main}. Hence, we prove part (A). 
It is clear from Theorem \ref{thm.main} that it is enough to have (1) and the following condition:

(2') there exists an integer $d$ and a root of unity $\mu$ such that $\gcd(d-1, m) = 1$, $\lcm(m, \ord(\mu)) = n$, and $(\mu q^{1-d})^m =1$. 

If (2) holds, then we can let $k = d-1$, and $\mu = q^{k} = q^{d-1}$ and obtain (2'). Conversely, if (2') holds, then setting $k = d - 1$, we have that $q^{km} = \mu^m$. But this means that there exists an $m$th root of unity $\zeta$ such that $q^k = \mu \zeta$. Now 
\[ \ord(\mu \zeta) = \lcm( \ord(\mu), \ord(\zeta)) = \lcm( \ord(\mu), m) = n.\qedhere\]
\end{proof}

\begin{proof}[Proof of Theorem \ref{thm.qthick}]
Again, part (B) follows directly from Theorem \ref{thm.main}. Hence, we prove part (A). 

In this case, we fix the parameters $h,q,\gamma,\mu$. Moreover, we have $m=\ord(\gamma)$ and $n=\ord(\eta_{\gamma,\mu})$. It is clear that the given condition is implied by Theorem \ref{thm.main}. 

Conversely, given the condition, then by the definition of $\eta_{\gamma,\mu}$, $\supp(h)$ is contained in a single congruence class modulo $m$. Further, we can let $k=d-1$ and choose a polynomial $\phi(t) \in \kk[t]$ such that $\phi(t)$ has degree $d$ and $\supp(\phi(t))$ is contained in a single congruence class modulo $m$. By choosing $\lambda=\gamma^k$ we have an action by $T_n(\lambda,m)$ on $A$.
\end{proof}

We next consider actions on higher rank quantum GWAs.

\begin{definition}
\label{defn.gengwa}
Let $q = (q_1,\hdots,q_\GWAdeg) \in (\kk\backslash\{0,1\})^\GWAdeg$, and let $h=(h_1(t),\hdots,h_\GWAdeg(t))$ be a $\GWAdeg$-tuple of non-constant polynomials in $\kk[t]$. Then \emph{quantum GWA of rank $\GWAdeg$} corresponding to this data is the $\kk$-algebra generated over $\kk[t]$ by $\bu=(u_1,\hdots,u_\GWAdeg)$ and $\bv=(v_1,\hdots,v_\GWAdeg)$ with relations
\begin{align*}
&u_it = q_i tx_i  &v_it = q_i\inv(r)y_i  &	&\text{ for all $i \in \{1,\hdots, \GWAdeg\}$} \\
&u_iv_i = h_i(q_i t)	&v_iu_i = h_i(t)	&	&\text{ for all $i \in \{1,\hdots, \GWAdeg\}$} \\
&[u_i,u_j]=[v_i,v_j]=[u_i,v_j]=0	& &	&\text{ for all distinct $i,j \in \{1,\hdots, \GWAdeg\}$}.
\end{align*}
We denote this algebra by $\kk[t](\bu,\bv,\sigma,h)$. 
\end{definition}

A rank $\GWAdeg$ GWA $A=\kk[t](\bu,\bv,\sigma,h)$ is naturally $\ZZ^\GWAdeg$-graded by setting $\deg(r)=\bzero$ for all $r \in R$, $\deg(u_i)=\be_i$, and $\deg(v_i)=-\be_i$. We denote by $A_i$ the (rank one) quantum GWA subalgebra of $A$ generated by $x_i,y_i,t$.

\begin{corollary}
\label{cor.rankp}
Let $A$ be a quantum GWA of rank $\GWAdeg$ with $q_i \neq -1$ for all $i$.
Let $T=T_n(\lambda,m)$ be a generalized Taft algebra.
Suppose each $A_i$ is an inner-faithful weakly $\ZZ$-graded $T$-module.
Then there is an inner-faithful 
$\ZZ^\GWAdeg$-graded action of $T$ on $A$ that restricts to the given actions on each of the subalgebras $A_i$.
\end{corollary}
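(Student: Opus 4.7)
The plan is to define $g$ and $x$ on the generators of $A$ using the given rank-one actions, extend them using the Hopf structure of $T$, and verify the result descends to a $T$-module algebra structure on $A$ of the desired type. First, every $A_i$ contains the common base $A_0 = \kk[t]$, so the hypothesized $T$-actions must restrict to a common $T$-action on $\kk[t]$. Since each $A_i$ is inner-faithful and $q_i \neq -1$, Lemma~\ref{lem.special} gives that $g$ acts on each $A_i$ as some $\eta_{\gamma, \mu_i}$ with a common $\gamma \neq 1$. By Theorem~\ref{thm.main} applied to each $A_i$, there is a common polynomial $\phi \in \kk[t]$ with $x(t) = \phi(t)$, together with parameters $\mu_i$ yielding
\[
g(u_i) = \mu_i^{-1}\gamma^{D_i} u_i, \quad g(v_i) = \mu_i v_i, \quad x(u_i) = \alpha^{(i)}_{11} u_i, \quad x(v_i) = \alpha^{(i)}_{22} v_i,
\]
where $D_i = \deg h_i$ and $\alpha^{(i)}_{11}, \alpha^{(i)}_{22}$ are the explicit scalar multiples of $\phi^{(1)}$ from Theorem~\ref{thm.main}.

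I would then define $g$ and $x$ on $A$ by these formulas on generators, extending $g$ multiplicatively and $x$ via the skew-derivation rule $x(ab) = g(a) x(b) + x(a) b$. To show this descends to a well-defined $T$-action on $A$, the only relations requiring verification are the cross-commutations $[u_i, u_j] = [v_i, v_j] = [u_i, v_j] = 0$ for $i \neq j$, since the relations internal to each $A_i$ are preserved by Theorem~\ref{thm.main}. Compatibility with $g$ is immediate since $g$ acts by scalars on each $u_i, v_i$; the hard part is $x$-compatibility.

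For the first $x$-compatibility, the skew-derivation rule together with the twisted commutation $u_i f(t) = f(q_i t) u_i$ yields
\[
x(u_i u_j - u_j u_i) = \bigl(\beta_i \alpha^{(j)}_{11}(q_i t) + \alpha^{(i)}_{11} - \beta_j \alpha^{(i)}_{11}(q_j t) - \alpha^{(j)}_{11}\bigr) u_i u_j,
\]
where $\beta_i = \mu_i^{-1} \gamma^{D_i}$. The key identity $\phi^{(1)}(q_i t) = q_i^{d-1} \phi^{(1)}(t)$ holds for each $i$ because the existence of the rank-one action forces $q_i^{d-k} = 1$ for every $k \in \supp(\phi)$, as implicit in the derivation of \eqref{eq.a22} in Lemma~\ref{lem.alpha}. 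Combined with $\alpha^{(i)}_{11} = c_i \phi^{(1)}$ where $c_i = (1 - \beta_i q_i^{d-1})/(1-\gamma)$ and the algebraic identity $\beta_i q_i^{d-1} = 1 - (1-\gamma) c_i$, the bracketed expression collapses to $0$. The analogous computations for $x([v_i, v_j])$ and $x([u_i, v_j])$ are symmetric and proceed identically.

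Finally, the action so defined is $\ZZ^\GWAdeg$-graded because $g$ and $x$ send each $u_i$ (resp.\ $v_i$) to a $\kk[t]$-multiple of $u_i$ (resp.\ $v_i$), preserving each homogeneous component. Inner-faithfulness is inherited from any single $A_i$: the group generated by $g$ acts faithfully, and $x(t) = \phi \neq 0$.
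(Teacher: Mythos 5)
Your proposal is correct and follows essentially the same route as the paper: take the common $\gamma$ and $\phi$ from the shared base ring $\kk[t]$, use the rank-one formulas from Theorem~\ref{thm.main} to define $g$ and $x$ on generators, and reduce the verification to the cross-commutation relations, which collapse via the identity $\phi^{(1)}(q_i t) = q_i^{d-1}\phi^{(1)}(t)$ exactly as in the paper's explicit computations for $x([u_i,u_j])$, $x([v_i,v_j])$, and $x([u_i,v_j])$. Your explicit remark that the hypotheses force the actions to agree on $\kk[t]$ is a small clarification the paper leaves implicit, but the argument is otherwise the same.
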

\begin{proof}
By the hypotheses, $g$ acts as some $\eta_{\gamma_i,\mu_i}$ on $A_i$, with $\gamma_i,\mu_i \in \kk^\times$ satisfying the conditions of Theorem \ref{thm.main}. Thus, 
\[ (gx-\lambda xg)(t) = (gx-\lambda xg)(u_i)=(gx-\lambda xg)(v_i) = 0
\]
and $x^m(t)=x^m(u_i)=x^m(v_i)=0$ for each $i$. Moreover, $g(r)=x(r)=0$ for each defining relation $r$ of $A_i$. It remains only to check that $g$ and $x$ satisfy the commutation relations between the $A_i$.

Choose $i,j$ with $1 \leq i < j \leq \GWAdeg$. To simplify the exposition, we take $i=1$ and $j=2$. Set
\begin{align*}
    &g(u_1) = \mu_1\inv \gamma_1^{D_1} u_1, \quad
    g(v_1) = \mu_1 v_1, \quad
    x(u_1) = \alpha_{11}u_1, \quad
    x(v_1) = \alpha_{22}v_1, \\
    &g(u_2) = \mu_2\inv \gamma_2^{D_2} u_2, \quad
    g(v_2) = \mu_2 v_2, \quad
    x(u_2) = \beta_{11}u_2, \quad
    x(v_2) = \beta_{22}v_2,
\end{align*}
where $\alpha_{ii},\beta_{ii}$ satisfy the appropriate conditions of Theorem \ref{thm.main}. Then
\begin{align*}
x(v_1v_2-v_2v_1)
    &= \left((\mu_1 v_1)(\beta_{22}v_2) + (\alpha_{22}v_1)v_2\right) - \left( (\mu_2v_2)(\alpha_{22}v_1) + (\beta_{22}v_2)v_1
    \right) \\
    &= \left( (\mu_1 q^{1-d} - 1)\beta_{22} + (1-\mu_2q^{1-d})\alpha_{22}\right)v_1v_2 = 0, \\
x(u_1u_2-u_2u_1)
    &= \left((\mu_1\inv \gamma^{D_1} u_1)(\beta_{11}u_2) + (\alpha_{11}u_1)u_2\right) \\
    &\qquad - \left( (\mu_2\inv \gamma^{D_2} u_2)(\alpha_{11}u_1) + (\beta_{11}u_2)u_1
    \right) \\
    &= \left( (\mu_1\inv \gamma^{D_1} q^{d-1} - 1)\beta_{11} + (1-\mu_2\inv \gamma^{D_2} q^{d-1})\alpha_{11}\right)u_1u_2 = 0, \\
x(u_1v_2-v_2u_1)
    &= \left((\mu_1\inv \gamma_1^D u_1)(\beta_{22}v_2) + (\alpha_{11}u_1)v_2\right) - \left( (\mu_2v_2)(\alpha_{11}u_1) + (\beta_{22}v_2)u_1
    \right) \\
    &= \left( (\mu_1\inv q^{d-1}\gamma_1^D-1) \beta_{22} + (1-q^{1-d}\mu_2)\alpha_{11}
    \right) u_1v_2 = 0.
\end{align*}
It follows that $T$ acts on $A$ preserving the $\ZZ^p$-grading.
\end{proof}

We note that the above result still holds in the case that some $q_i=-1$ so long as we require that $g$ act as some $\eta_{\gamma,\mu}$ with $\gamma \neq 1$.

Returning to the case that $A$ is a rank one GWA, our final result finishes the classification in the case that $q=-1$ and $g$ acts as some $\Omega \circ \eta_{\gamma,\mu}$. That is
\begin{align}
\label{eq.gomega}
g(t) = -\gamma t, \quad
g(v) = \mu u, \quad
g(u) = \mu\inv \gamma^{D} v.
\end{align}

\begin{theorem}\label{thm.omega}
Fix a GWA $A = \kk[t](u,v,\sigma,h)$ with defining polynomial $h$ of degree $D>0$, and defining automorphism $\sigma(t) = -t$. Let $T=T_n(\lambda,m)$ be a generalized Taft algebra.
Suppose that $A$ is an inner-faithful weakly $\ZZ$-graded $T$-module algebra where $g$ acts as an automorphism $\Omega \circ \eta_{\gamma,\mu}$ and the action of $x$ is given by \eqref{eq.xact}. Then
\begin{enumerate}
    \item $\lambda=\gamma=-1$,
    \item $\phi=0$,
    \item  $\sigma(\alpha_{11})=\pm \alpha_{11}$, $\alpha_{22}=-\alpha_{11}$, $\alpha_{12}=-\mu\sigma(\alpha_{11})$, $\alpha_{21}=\mu\inv \sigma\inv(\alpha_{11})$,
    and
    \item $\sigma(h)=\gamma^{D}h$.
\end{enumerate}

Conversely, defining an action of $g$ as in \eqref{eq.gomega} and an action of $x$ by \eqref{eq.xact} subject to conditions (1)--(4),
then it is possible to extend this action to all of $A$ to make $A$ an inner-faithful weakly $\ZZ$-graded $T$-module algebra.
\end{theorem}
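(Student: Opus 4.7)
The plan is to follow the same overall strategy as the proof of Theorem~\ref{thm.main}, but accounting for the fact that $g$ now interchanges $u$ and $v$ up to scaling, introducing couplings between $\alpha_{11}$ and $\alpha_{22}$ (and between $\alpha_{12}$ and $\alpha_{21}$). I would begin by applying the skew-commutation relation $gx = \lambda xg$ to the generators. With $g(t) = -\gamma t$, the equation on $t$ gives $\phi(-\gamma t) = -\lambda\gamma\phi(t)$. Because $g$ swaps $u$ and $v$ (up to scaling), the equations on $u$ and $v$ couple the $\alpha_{ij}$ in pairs: separating coefficients of $u$ and $v$ yields $\alpha_{22} = \lambda\inv g(\alpha_{11})$, $\alpha_{11} = \lambda\inv g(\alpha_{22})$, together with analogous identities for $\alpha_{12}$ and $\alpha_{21}$. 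Composing these shows $g^2(\alpha_{ij}) = \lambda^2\alpha_{ij}$, which when combined with $g^2 = \eta_{\gamma^2,\gamma^D}$ imposes support constraints on each $\alpha_{ij}$.

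Next, I would apply $x$ as a twisted derivation to the $t$-commutation relations $ut + tu = 0$ and $vt + tv = 0$ of $A$, which are available since $q=-1$. Using $u\cdot f(t) = \sigma(f)u$ and collecting the $u$- and $v$-coefficients produces the symmetric pair of equations $\phi = (1+\gamma)t\alpha_{11}$ and $\phi = (1+\gamma)t\alpha_{22}$. Combined with the preceding step, the nilpotence conditions $x^m=0$ on $t$, $u$, and $v$, and inner-faithfulness, I would argue that $\gamma = -1$ is forced, which immediately yields $\phi = 0$ (conclusion (2)); then returning to the coupled relations with $g$ now trivial on $\kk[t]$, the identity $\alpha_{11} = \lambda^{-2} g^2(\alpha_{11}) = \lambda^{-2}\alpha_{11}$ forces $\lambda^2 = 1$, so $\lambda = -1$ (hence $m=2$ and $\alpha_{22} = -\alpha_{11}$), completing conclusion (1).

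With $\gamma = \lambda = -1$ and $\phi = 0$ established, I would apply $x$ to the remaining relations $vu = h$ and $uv = \sigma(h)$. The $u^2$-coefficient of $x(vu - h) = 0$ gives $\alpha_{12} = -\mu\sigma(\alpha_{11})$; the $v^2$-coefficient of $x(uv - \sigma(h)) = 0$ gives $\alpha_{21} = -\mu\inv\gamma^D\sigma(\alpha_{22})$, which simplifies using $\alpha_{22} = -\alpha_{11}$ and $\sigma = \sigma\inv$ (since $q=-1$) to the form in condition (3); and the scalar-in-$\kk[t]$ component of either equation collapses (since $x$ annihilates $\kk[t]$ when $\phi=0$) to $\sigma(h) = \gamma^D h$, which is condition (4). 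Finally, the condition $\sigma(\alpha_{11}) = \pm\alpha_{11}$ emerges from the nilpotence requirement $x^2(u) = 0$: direct expansion using the derivation rule yields an expression proportional to $\alpha_{11}^2 - \sigma(\alpha_{11})\alpha_{11}$ in the $u$-component, forcing $\sigma(\alpha_{11}) = \alpha_{11}$ or $\sigma(\alpha_{11}) = -\alpha_{11}$.

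The converse is a direct verification: given $(\gamma,\mu,\alpha_{11},h)$ satisfying (1)--(4), define the action on generators via the prescribed formulas and extend to $A$ through the coproduct of $T$. One must check that $g$ is an algebra automorphism of $A$ of order dividing $n$, that $x$ annihilates each of the four defining relations of $A$, and that $gx + xg = 0$ and $x^2 = 0$ hold on generators; each follows by direct substitution using $\sigma^2 = \id$, the relation $\sigma(h) = \gamma^D h$, and the sign choice $\sigma(\alpha_{11}) = \pm\alpha_{11}$. The main obstacle will be the case analysis in Step 2, where ruling out $\gamma \neq -1$ (in which the coupled equations give $\alpha_{11} = \alpha_{22}$ rather than $\alpha_{22} = -\alpha_{11}$) requires a delicate interplay between the derivation identities on $h$ analogous to Lemma~\ref{lem.xtd}, the support constraints on $\phi$ and the $\alpha_{ij}$ derived from the skew-commutation, and the nilpotence of $x$ acting on $u$ and $v$.
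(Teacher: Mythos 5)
You set up the same system of equations as the paper---applying $x$ to the relations $vu=h$, $uv=\sigma(h)$, $ut+tu$, $vt+tv$ and applying $gx-\lambda xg$ to the generators---and your downstream deductions of the formulas for $\alpha_{12},\alpha_{21}$, of $\sigma(h)=\gamma^D h$, and of $\sigma(\alpha_{11})=\pm\alpha_{11}$ from nilpotence all match the paper's. The genuine gap is at the heart of the theorem, namely conclusion (1) (and with it (2)). You propose to force $\gamma=-1$ first, then deduce $\phi=0$, then $\lambda=-1$, but you never supply the argument that forces $\gamma=-1$: you record only that the $t$-commutation relations give $\phi=(1+\gamma)t\alpha_{11}=(1+\gamma)t\alpha_{22}$ and then defer the rest to ``a delicate interplay,'' explicitly leaving open the case $\gamma\neq-1$ with $\alpha_{11}=\alpha_{22}$. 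That case is precisely what must be excluded, and your proposal does not exclude it.

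The paper closes this in the opposite order, and the order is what makes it work. From $(gx-\lambda xg)(u)=0$ and $(gx-\lambda xg)(v)=0$ it reads off $\alpha_{11}=\lambda\alpha_{22}$ and $\alpha_{22}=\lambda\alpha_{11}$ (equations \eqref{sp.gxu}--\eqref{sp.gxv}), hence $\alpha_{11}=\lambda^2\alpha_{11}$; since $\lambda\neq 1$ and the $\alpha_{ij}$ cannot all vanish (otherwise $\phi=(1+\gamma)t\alpha_{11}=0$ as well and $x$ acts by zero, contradicting inner-faithfulness), this gives $\lambda=-1$ and $\alpha_{22}=-\alpha_{11}$ \emph{before} anything is known about $\gamma$. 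Only then does $\phi=(1+\gamma)t\alpha_{11}=(1+\gamma)t\alpha_{22}=-\phi$ yield $\phi=0$, whence $(1+\gamma)t\alpha_{11}=0$ with $\alpha_{11}\neq 0$ yields $\gamma=-1$. Your $g$-twisted form of the coupled relations only produces $g^2(\alpha_{ij})=\lambda^2\alpha_{ij}$, i.e.\ a support condition on $\alpha_{ij}$ rather than $\lambda^2=1$, which is exactly why your route stalls; you need the untwisted identification of the $u$- and $v$-coefficients as in \eqref{sp.gxu}--\eqref{sp.gxv}. Two smaller slips: the $u$-component of $x^2(u)=0$ should reduce to the difference of squares $\sigma(\alpha_{11})^2-\alpha_{11}^2$ in the domain $\kk[t]$ (your expression $\alpha_{11}^2-\sigma(\alpha_{11})\alpha_{11}$ would force only one sign), and your derived formula for $\alpha_{21}$ carries a residual factor of $\gamma^D$ that you should reconcile with condition (3).
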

\begin{proof}
Suppose that $A$ is an inner-faithful weakly $\ZZ$-graded $T$-module algebra where $g$ acts as an automorphism $\Omega \circ \eta_{\gamma,\mu}$.
Since $A$ is a $T$-module algebra, the action of $x$ must preserve the relations of $A$ so
\begin{align}
\label{sp.vuh}
0 &= x(vu-h) 
   = (\mu \sigma(\alpha_{11})+\alpha_{12})u^2 + \mu \sigma(\alpha_{21})\sigma(h) + \alpha_{22}h - x(h),\\
\label{sp.uvh}
\begin{split}
0 &= x(uv-\sigma(h)) 
    = (\mu\inv \gamma^{D}\sigma\inv(\alpha_{22})+\alpha_{21})v^2 + \mu\inv \gamma^{D}\sigma\inv(\alpha_{12})h \\
    &\hspace{10em} + \alpha_{11}\sigma(h) - x(\sigma(h)).
\end{split}
\end{align}
By the $\ZZ$-grading on $A$, we see that $\alpha_{12}=-\mu\sigma(\alpha_{11})$ and $\alpha_{21}=-\mu\inv \gamma^{D}\sigma\inv(\alpha_{22})$. 
Further, since the relations of $T$ must act as $0$ on $A$, we have
\begin{align}
\label{sp.gxu}
0 &= (gx-\lambda xg)(u)
    = -\gamma^{D}(\sigma\inv(\alpha_{22}) - \lambda \sigma(\alpha_{11}) )u + \mu\inv \gamma^{D}(\alpha_{11} - \lambda \alpha_{22})v, \\
\label{sp.gxv}
0 &= (gx-\lambda xg)(v)
    = -\gamma^{D}(\sigma(\alpha_{11}) - \lambda \sigma\inv(\alpha_{22}))v + \mu(\alpha_{22} - \lambda \alpha_{11})u,  \\
\label{sp.gxt}
0 &= (gx-\lambda xg)(t)
    = g(\phi) - \lambda x(-\gamma t)
    = \phi(-\gamma t) + \lambda\gamma \phi(t).
\end{align}
By \eqref{sp.gxu} and \eqref{sp.gxv},
$\alpha_{11}=\lambda\alpha_{22}=\lambda^2\alpha_{11}$,
so either $\lambda=\pm 1$ or else $\alpha_{ij}=0$ for all $i,j$. By Hypothesis~\ref{hyp.taft}, $\lambda \neq 1$.

Further we have
\begin{align}
\label{sp.ut}
0 &= x(ut+tu)
    = (\mu\inv \gamma^{D})v\phi + \phi u - (1+\gamma)t(\alpha_{11}u + \alpha_{21}v), \\
\label{sp.vt}
0 &= x(vt+tv)
    = \mu u\phi + \phi v - (1+\gamma)t(\alpha_{12}u + \alpha_{22}v).
\end{align}
By \eqref{sp.gxu}, \eqref{sp.ut}, and \eqref{sp.vt},
\[
\phi = (1+\gamma)t\alpha_{11} = -(1+\gamma)t\alpha_{22} = - \phi.
\]
Thus, $\phi=0$ and $\gamma=-1$. Note this implies that $g(t)=t$.

By \eqref{sp.uvh},
\[ 0    = \mu\inv \gamma^{D}\sigma\inv(\alpha_{12})h + \alpha_{11}\sigma(h)
    = \alpha_{11}(\sigma(h)-\gamma^{D}h),
\]
so $\sigma(h)=\gamma^{D}h$. 


Now we need
\begin{align*}
0 &= x^2(u) = (\alpha_{11}^2+\alpha_{21}\alpha_{12})u + \alpha_{21}(\alpha_{11}+\alpha_{22})v =  (\alpha_{11}^2+\alpha_{21}\alpha_{12})u, \\
0 &= x^2(v) = \alpha_{12}(\alpha_{11}+\alpha_{22})u + (\alpha_{22}^2+\alpha_{21}\alpha_{12})v =  (\alpha_{22}^2+\alpha_{21}\alpha_{12})v.
\end{align*}
This gives
\begin{align*}
0 &= \alpha_{11}^2+\alpha_{21}\alpha_{12}
    = \alpha_{11}^2+(-\mu\inv \sigma\inv(\alpha_{22})(-\mu \sigma(\alpha_{11}))
    = \sigma\inv (\sigma(\alpha_{11})^2 - \alpha_{11}^2),
\end{align*}
so $\sigma(\alpha_{11})=\pm \alpha_{11}$.

Conversely, let $A$ be a quantum GWA with $q=-1$, let $T=T_2(-1,2)$ be the Sweedler Hopf algebra. Suppose $g$ acts on $A$ as an automorphism $\Omega \circ \eta_{-1,\mu}$ for some $\mu \in \kk^\times$. Suppose $x(t)=0$, $x(u)=\alpha_{11} u + \alpha_{21} v$, and $x(v)=\alpha_{12} u + \alpha_{22} v$ with the $\alpha_{ij} \in \kk[t]$ satisfying (3).
Finally, suppose $\sigma(h)=(-1)^D h$ where $D=\deg_t(h)$. We extend the action of $x$ to all of $A$. It is then easy to check that \eqref{sp.vuh}--\eqref{sp.vt} are satisfied. Thus, this makes $A$ into a $T$-module algebra.
\end{proof}

\section{Fixed rings by Taft actions}
\label{sec.fixed}

For a Hopf algebra $H$ and an $H$-module algebra $A$, the \emph{fixed ring of $A$ by $H$} is
\[ A^H = \{ a \in H \mid h(a)=\epsilon(h)a \text{ for all } h \in H\}.\]
This generalizes the usual definition of the fixed ring of a group action.
In this section we consider fixed rings of quantum GWAs by Taft algebra actions.

Let $A=\kk[t](u,v,\sigma,h)$ be a quantum GWA and set $\eta=\eta_{\gamma,\mu}$.
Suppose $\gcd(\ord(\gamma),\ord(\mu))=1$ and $\ord(\gamma) \mid D$. Then $A^{\grp{\eta}}$ is again a quantum GWA generated by $t^{\ord(\gamma)}$, $u^{\ord(\mu)}$, and $v^{\ord(\mu)}$ \cite[Corollary 3.5]{GHo}. 
However, if $T=T_n(\lambda)$ is a Taft algebra and $A$ an inner-faithful weakly $\ZZ$-graded $T$-module algebra such that $g$ acts as some $\eta_{\gamma,\mu}$ with $\gamma \neq 1$, then $\gcd(\ord(\gamma),\ord(\mu))\neq 1$ in general (in fact this holds if and only if $\mu=1$). Hence, the above method will not apply when computing $A^T = (A^{\grp{g}})^{\grp{x}}$.

\begin{theorem}
\label{thm.fixed}
Fix a GWA $A = \kk[t](u,v,\sigma,h)$ with defining polynomial $h$ of degree $D>0$ and defining automorphism $\sigma(t) = qt$ where $q \in \kk^\times$ is a root of unity, $q \neq 1$. 
Let $T=T_n(\lambda)$ be a Taft algebra. Suppose that $A$ is an inner-faithful weakly $\ZZ$-graded $T$-module algebra where $g$ acts as $\eta_{\gamma,\mu} \in \Aut(A)$ with $\gamma \neq 1$.
\begin{enumerate}
    \item \label{fix1} Suppose $k>0$ and $x(v) \neq 0$. If $q^k \neq 1$, then $(A^T)_{-k}=0$. If $q^k=1$, then
    $f(t)v^k \in (A^T)_{-k}$ if and only $\supp(f(t)) \subset \{ a \in \NN \mid \gamma^a \mu^k = 1\}$. 
    
    \item \label{fix2} Suppose $k>0$ and $x(u) \neq 0$. If $q^k \neq 1$, then $(A^T)_k=0$. If $q^k=1$, then
    $g(t)u^k \in (A^T)_k$ if and only if $\supp(g(t)) \subset \{ b \in \NN \mid \gamma^{b+kD} \mu^{-k} = 1\}$.
    
    \item \label{fix3} $A^T$ is a commutative ring.
    In particular, $A^T \iso \kk[U,V,T]/(UV-H)$ for some polynomial $H \in \kk[T]$.
\end{enumerate}
\end{theorem}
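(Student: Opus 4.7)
The proof is by direct computation, exploiting the fact that $a \in A^T$ if and only if $g(a) = a$ and $x(a) = 0$ (since $T$ is generated as an algebra by $g$ and $x$ with $\epsilon(g) = 1$ and $\epsilon(x) = 0$). By Lemma~\ref{lem.special} the $T$-action is $\ZZ$-graded, so $A^T$ inherits a $\ZZ$-grading, and a general element of $A_{-k}$ has the form $f(t) v^k$ with $f \in \kk[t]$. The $g$-invariance condition $g(f v^k) = f(\gamma t) \mu^k v^k = f v^k$ reduces immediately to $\supp(f) \subset \{a : \gamma^a \mu^k = 1\}$.

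For $x$-invariance I would first establish by induction on $k$, using $\Delta(x) = g \otimes x + x \otimes 1$ and the commutation rule $v \cdot p(t) = p(q^{-1}t) v$ for $p \in \kk[t]$, the formula
\[
x(v^k) = \Phi_k(t) v^k, \qquad \Phi_k(t) = \sum_{i=0}^{k-1} \mu^i \alpha_{22}(q^{-i}t).
\]
By the skew-Leibniz rule $x(f v^k) = f(\gamma t) \Phi_k(t) v^k + \phi(t) \delta_\gamma(f) v^k$, and under $g$-invariance this equals $(\mu^{-k} f \Phi_k + \phi \delta_\gamma f) v^k$, while one also has the clean identity $\delta_\gamma(f) = \frac{1-\mu^{-k}}{1-\gamma} \cdot f/t$ whenever $0 \notin \supp(f)$. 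Plugging in $\alpha_{22} = \frac{1-\mu q^{1-d}}{1-\gamma}\phi^{(1)}$ from Theorem~\ref{thm.main}(7) and using that $t \mid \phi$ (Lemma~\ref{lem.alpha}(2)), so $\phi^{(1)} = \phi/t$, in the case $q^k=1$ one finds $\Phi_k = \frac{1-\mu^k}{1-\gamma}\phi^{(1)}$, and the two contributions to $x(fv^k)$ cancel algebraically; when $\mu^{-k}=1$ both pieces vanish separately. Hence every $g$-invariant $fv^k$ is automatically $x$-invariant when $q^k=1$. In the case $q^k \neq 1$, a leading-term comparison in $t$ reduces the equation $\mu^{-k} f \Phi_k + \phi \delta_\gamma f = 0$ to the scalar equation $f_a \phi_d (1 - q^{(1-d)k})/(1-\gamma)=0$, and a descending induction on the coefficients of $f$ (using the support constraints $\supp(\phi) \subset d + m\ZZ$ and the $g$-invariance congruence on $\supp(f)$) forces $f = 0$. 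Part (2) is entirely analogous with $u$ and $\alpha_{11}$ replacing $v$ and $\alpha_{22}$.

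For part (3), I would first prove commutativity. By parts~(1) and~(2) every nonzero homogeneous piece of $A^T$ lies in $\kk[t] u^k$ or $\kk[t] v^{-k}$ with $q^k=1$. A direct induction gives the identities
\[
u^k v^k = \prod_{j=1}^{k} h(q^j t), \qquad v^k u^k = \prod_{j=1-k}^{0} h(q^j t),
\]
which agree when $q^k = 1$ because $\{q^j : 1 \le j \le k\} = \{q^j : 1-k \le j \le 0\}$ as multisets. The same cyclic-shift argument, applied to $u^k v^\ell$ versus $v^\ell u^k$ after first moving one into the form (polynomial in $t$)(remaining $u$ or $v$ power), shows $u^k v^\ell = v^\ell u^k$ when $q^k = q^\ell = 1$. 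Since also $t \cdot u^k = q^{-k} u^k t = u^k t$ and $t \cdot v^k = v^k t$ in that regime, all homogeneous pieces of $A^T$ commute and so $A^T$ is commutative.

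Finally I would compute $(A^T)_0 = \kk[t]^T = \kk[t^m]$: the $g$-invariance forces $f \in \kk[t^m]$, while $\delta_\gamma$ annihilates $\kk[t^m]$ (since $[mk]_\gamma = 0$ for every $k$), so $x$-invariance is automatic. Setting $T = t^m$, letting $k_0>0$ be minimal with $(A^T)_{k_0} \neq 0$, and choosing nonzero $U \in (A^T)_{k_0}$, $V \in (A^T)_{-k_0}$, the product $UV \in (A^T)_0 = \kk[T]$ yields a polynomial $H \in \kk[T]$ with $UV = H(T)$. A dimension count using the support characterisations of parts (1) and (2) shows that each $(A^T)_{\pm k k_0}$ is a free rank-one $\kk[T]$-module generated by $U^k$ or $V^k$, so the natural map $\kk[U, V, T]/(UV - H) \to A^T$ is an isomorphism. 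The main obstacle will be the descending induction in the $q^k \neq 1$ case of part~(1), which requires careful bookkeeping of how the vanishing of successive $t$-coefficients propagates through the congruence constraints on $\supp(f)$ imposed by $g$-invariance.
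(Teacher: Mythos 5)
Your treatment of parts (1) and (2) is essentially the paper's argument: your $\Phi_k(t)=\sum_{i=0}^{k-1}\mu^i\alpha_{22}(q^{-i}t)$ collapses, via the support condition on $\phi$ (which gives $\alpha_{22}(q^{-i}t)=q^{i(1-d)}\alpha_{22}(t)$), to the paper's $[k]_\omega\alpha_{22}$ with $\omega=\mu q^{1-d}$, and the cancellation you describe is exactly the paper's computation that $x(t^av^k)=\tfrac{1-q^{k(1-d)}}{1-\gamma}\phi^{(1)}t^av^k$ for $g$-invariant monomials. One simplification: in the case $q^k\neq1$ no descending induction on the coefficients of $f$ is needed. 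Your own identities already give $x(fv^k)=\tfrac{1-q^{k(1-d)}}{1-\gamma}\,f\,\phi^{(1)}\,v^k$, a scalar times a product of polynomials; since $\phi^{(1)}\neq0$ (because $\alpha_{22}\neq0$) and $q^{k(1-d)}\neq1$ (from $\gcd(d-1,n)=1$), vanishing forces $f=0$ outright. So the "main obstacle" you anticipate is not an obstacle at all.

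The genuine gap is in part (3). Your commutativity argument opens with "by parts (1) and (2) every nonzero homogeneous piece of $A^T$ lies in $\kk[t]u^k$ or $\kk[t]v^{-k}$ with $q^k=1$," but parts (1) and (2) carry the hypotheses $x(v)\neq0$ and $x(u)\neq0$, and these can fail for an inner-faithful action: $\alpha_{22}=\tfrac{1-\mu q^{1-d}}{1-\gamma}\phi^{(1)}$ vanishes when $\mu=q^{d-1}$ or when $d=0$ (similarly $\alpha_{11}=0$ when $\mu=\gamma^Dq^{d-1}$), while inner-faithfulness only requires $\phi\neq0$. When, say, $\alpha_{22}=0$, the computation changes entirely: $x(t^av^k)=[a]_\gamma\phi\, t^{a-1}v^k$, so $x$-invariance forces $n\mid a$ and then $g$-invariance forces $\mu^k=1$, with no condition $q^k=1$ appearing. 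The degrees in which $A^T$ is supported, the generators $U,V$, and the verification that they commute are all different in this regime; the paper treats $\alpha_{11}=\alpha_{22}=0$ as a separate case for exactly this reason (and the mixed case, where exactly one of $\alpha_{11},\alpha_{22}$ vanishes, also needs a word). Your proof of (3) must dispose of these degenerate cases before the claim "$q^k=1$ in every supported degree" and the ensuing cyclic-shift commutativity check can be invoked. The rest of your part (3) --- the product formulas $u^kv^k=\prod_{j=1}^kh(q^jt)$ and $v^ku^k=\prod_{j=1-k}^{0}h(q^jt)$ and their agreement when $q^k=1$ --- is a correct, mildly reorganized version of the paper's argument via $\sigma^{-i}(h)=\sigma^{k-i}(h)$, and the concluding free-module count for the presentation $\kk[U,V,T]/(UV-H)$ is at the same level of detail as the paper's.
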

\begin{proof}
 By the definition of $\eta_{\gamma,\mu}$ and Lemma~\ref{lem.special}, the actions of both $g$ and $x$ respect the grading on $A$ and hence $(A^T)_k = (A_k)^T$ for all $k \in \ZZ$. Clearly, $(A_0)^\grp{g} = \kk[t^n]$ and $x(t^n)=0$ by \eqref{eq.xtn}. Thus, $(A_0)^T = \kk[t^n]$. 

\eqref{fix1} By Theorem \ref{thm.main}, $x(v) = \alpha_{22} v$ where $\alpha_{22} = \frac{1-\mu q^{1-d}}{1-\gamma} \phi^{(1)}$. Thus, 
\[ x(v^2) 
    = \left(\frac{1-\mu q^{1-d}}{1-\gamma}\right) \left(\mu \phi^{(1)}(q\inv t) + \phi^{(1)}(t)\right)v^2.
\]
But $q$ is an $n^\text{th}$ root of unity and $\supp(\phi)$ (and hence $\supp(\phi^{(1)})$) is contained in a single congruence class mod $n$. It follows that $\phi^{(1)}(q\inv t) = q^{1-d} \phi^{(1)}(t)$. Set $\omega=\mu q^{1-d}$. Then $x(v^2) = \alpha_{22}[2]_\omega v^2$.
An induction argument then shows that $x(v^k)=\alpha_{22}[k]_\omega v^k$.

Suppose $t^a v^k \in A^T$. Then $t^a v^k=g(t^a v^k)=\gamma^a \mu^k t^a v^k$. Thus, $\gamma^a\mu^k=1$. Now,
\begin{align*}
x(t^av^k)
    &= g(t^a)x(v^k) + x(t^a)v^k \\
    &= \gamma^a t^a ( \alpha_{22} [k]_\omega v^k) + \phi [a]_\gamma t^{a-1}v^k \\
    &= \left( \frac{\gamma^a-\gamma^a\mu^k q^{k(1-d)}}{1-\gamma} + [a]_\gamma \right)\phi^{(1)} t^a v^k \\  
    &= \left( \frac{1-q^{k(1-d)}}{1-\gamma} \right)\phi^{(1)} t^a v^k.
\end{align*}
Hence, $x(t^a v^k)=0$ if and only if $q^{k(1-d)}=1$. Since $\gcd(d-1,n)=1$, then this is equivalent to $q^k=1$.

By linearity, it follows that $f(t)v^k \in A^T$ then $\supp(f(t))$ is contained in a single congruence class modulo $n$. 

\eqref{fix2} This is similar to (2).


\eqref{fix3} First consider the case $\alpha_{11},\alpha_{22} \neq 0$. Let $k$ be the minimal positive integer such that $q^k=1$. Choose $a,b \in \NN$ minimal such that $\gamma^a \mu^k=1$ and $\gamma^{b+kD}\mu^{-k}=1$. By (1)-(3), $A^T$ is generated as an algebra by $T=t^n$, $V=t^av^k$, and $U=t^bu^k$. Clearly $U$ and $V$ commute with $T$ since $q^n=1$. Moreover, induction shows that
\begin{align*}
UV &= t^{a+b} \prod_{i=1}^k \sigma^i(h) =: H \quad\text{and}\quad
VU = t^{a+b} \prod_{i=0}^{k-1} \sigma^{-i}(h).
\end{align*}
Since 
\begin{align}\label{eq.sigma}
\sigma^{-i}(h(t)) = h(q^{-i} t) = q^{-Di} h(t)
    = q^{D(k-i)} h(t)
    = h(q^{k-i} t)
    = \sigma^{k-i}(h(t)),
\end{align}
then it follows that $UV=VU$. Hence, $A^T \iso \kk[U,V,T]/(UV-H)$.

Now suppose $\alpha_{11}=\alpha_{22}=0$. Let $t^av^k \in A^T$, then
\[ 0 = x(t^av^k) = g(t^a)x(v^k) + x(t^a)v^k = [a]_\gamma t^a v^k,\]
so $n \mid a$. Assuming this, we have $g(t^av^k)=\gamma^a \mu^k t^av^k = \mu^k t^av^k$, so $\ord(\mu) \mid k$. It follows that $A^T$ is generated by $T=t^n$, $V=v^k$, and $U=u^k$, where $k=\ord(\mu)$. As above, $U$ and $V$ commute with $T$ since $q^n=1$. The argument in \eqref{eq.sigma} shows that $UV=VU$. Hence, $A^T \iso \kk[U,V,T]/(UV-H)$ as before.
\end{proof}

The main result of Theorem \ref{thm.fixed} is consistent with several other related results in the literature. The fixed ring of a quantum plane or quantum Weyl algebra by a Taft algebra acting linearly and inner-faithfully  was shown to be a commutative polynomial ring \cite[Lemma 2.1]{GWY}. Similar results exist in the case of generalized Taft actions \cite[Proposition 5.6]{CG}. On the other hand, these fixed rings are very different from the fixed ring under actions of cyclic groups, which shows that considering Hopf actions is very significant.

If we consider the associated graded ring (with respect to the standard $\NN$-filtration) of the fixed rings appearing in Theorem~\ref{thm.fixed}, we obtain a Kleinian singularity.
The automorphisms of Kleinian singularities are well-studied \cite{BD,ML2}. In light of Theorem~\ref{thm.fixed}, it would be interesting to further study the relationship between the representation theory of the GWA and the corresponding Kleinian singularity. 

We remark that in the setting of Theorem \ref{thm.omega} (i.e., when $g$ acts via some $\Omega \circ \eta_{\gamma,\mu}$), the fixed ring is not always commutative. In the notation of that theorem, take $D$ to be even and $\sigma(\alpha_{11})=\alpha_{11}$. By \cite[Lemma 3.7]{GHo}, $A^{\grp{g}}$ is generated by $T=t$ and $W=\mu u + v$. In fact, $A^{\grp{g}} \iso \kk_{-1}[W,T]$, the $(-1)$-skew polynomial ring in two variables. Now $x(T)=0$ and one checks that $x(W)=0$ so in fact $A^T \iso \kk_{-1}[W,T]$.

\bibliographystyle{amsplain}

\providecommand{\bysame}{\leavevmode\hbox to3em{\hrulefill}\thinspace}
\providecommand{\MR}{\relax\ifhmode\unskip\space\fi MR }
\providecommand{\MRhref}[2]{%
  \href{http://www.ams.org/mathscinet-getitem?mr=#1}{#2}
}
\providecommand{\href}[2]{#2}

\end{document}